\definecolor{mycolor}{rgb}{0.100, 0.300, 0.950}
\numberwithin{equation}{section}
\newtheorem{theorem}{Theorem}
\newtheorem{proposition}[theorem]{Proposition}
\newtheorem{conj}{Conjecture}
\theoremstyle{remark}
\newtheorem{remark}[theorem]{Remark}
\newcommand{\al}{\alpha}
\newcommand{\be}{\beta}
\newcommand{\fn}{\mathfrak{n}}
\newcommand{\fnp}{\mathfrak{n}^\prime}
\newcommand{\Z}{\mbox{$\mathbb Z$}}
\newcommand{\Q}{\mbox{$\mathbb Q$}}
\newcommand{\Fl}{\mathbb{F}_\ell}
\newcommand{\Da}{\Delta_{a}}
\newcommand{\Db}{\Delta_b}
\newcommand{\Gal}{\operatorname{Gal}}
\newcommand{\brhoE}{\bar{\rho}_{E,n}}
\newcommand{\brhoEi}{\bar{\rho}_{E_{i,k}}}
\newcommand{\brhoFi}{\bar{\rho}_{F_{i,k}}}
\newcommand{\brhof}{\bar{\rho}_{f,\fn}}
\newcommand{\brhog}{\bar{\rho}_{g,\fnp}}
\newcommand\raisepunct[1]{\,\mathpunct{\raisebox{0.5ex}{#1}}} 
\DeclareMathOperator{\Frob}{Frob}
\DeclareMathOperator{\GL}{GL}
\DeclareMathOperator{\Tr}{Tr}
\begin{document}
\title{Perfect powers in sum of three fifth powers} 
\author[Das]{Pranabesh Das}

\address{Pure Mathematics, University of Waterloo, 200 University Avenue West, 
Waterloo, Ontario, Canada
N2L 3G1 }
\email{pranabesh.math@gmail.com}

\author[Dey]{Pallab Kanti Dey}

\address{Department of Mathematics, Ramakrishna Mission Vivekananda Educational and Research Institute, Belur Math, Howrah - 711201, West Bengal, India.}
\email{pallabkantidey@gmail.com}

\author[Koutsianas]{Angelos Koutsianas}

\address{Department of Mathematics, University of British Columbia\\
1984 Mathematics Road, Vancouver, BC V6T 1Z2, Canada.} 
\email{akoutsianas@math.ubc.ca}

\author[Tzanakis]{Nikos Tzanakis}
\address{Department of Mathematics \& Applied Mathematics,
University of Crete, Voutes Campus, 70013 Heraklion, Greece}
\email{tzanakis@math.uoc.gr}

\thanks{2010 Mathematics Subject Classification: Primary 11D61, Secondary 11D41, 11F11, 11F80.  \\
Keywords: Diophantine equation, Galois representation, Frey curve, modularity, level lowering, sum of perfect powers}
\maketitle
\pagenumbering{arabic}

\begin{abstract}
In this paper we determine the perfect powers that are sums of three fifth powers in an arithmetic progression. More precisely, we completely solve the Diophantine equation 
$$
(x-d)^5 + x^5 + (x + d)^5 = z^n,~n\geq 2,
$$
where $d,x,z \in \mathbb{Z}$ and $d = 2^a5^b$ with $a,b\geq 0$.  
\end{abstract}

\section{Introduction}

In 1956, Sch\"{a}ffer \cite{Schaffer56} considered the equation 
\begin{equation}\label{eq0a}
1^k + 2^k +\cdots + x^k =y^n.
\end{equation}
He proved that if $k\geq 1$ and $n\geq 2$ are fixed, then \eqref{eq0a} has only finitely many solutions except for the cases $(k,n) \in \{(1,2), (3,2), (3,4), (5,2)\}$. In the same paper Sch\"{a}ffer stated the following conjecture on the integral solution of \eqref{eq0a}.
\begin{conj}{\bf [Sch\"{a}ffer, \cite{Schaffer56}]}\label{Main_Conj} \\
Let $k\geq 1,n\geq 2$ be integers and $(k,n) \notin \{(1,2), (3,2), (3,4), (5,2)\}$. The equation
\begin{equation*}
1^k + 2^k +\cdots + x^k =y^n,
\end{equation*}
has only one non-trivial solution, namely $(k,n,x,y) = (2,2,24,70)$.
\end{conj}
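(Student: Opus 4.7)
The plan is to combine the polynomial structure of the power sum with the modular method in the style of Wiles--Frey--Ribet. Writing
$$
S_k(x) := 1^k + 2^k + \cdots + x^k
$$
as a polynomial in $x$ of degree $k+1$ via the Bernoulli-polynomial formula $S_k(x) = \frac{1}{k+1}(B_{k+1}(x+1) - B_{k+1}(1))$, and invoking Faulhaber's identities (so that $x(x+1) \mid S_k(x)$ always, $x^2(x+1)^2 \mid 4 S_k(x)$ when $k$ is odd, and $x(x+1)(2x+1) \mid 6 S_k(x)$ when $k$ is even), one factors $S_k(x) = A_k(x)\cdot Q_k(x)$ into nearly coprime parts. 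A standard descent then reduces the equation $S_k(x) = y^n$, for each fixed $(k,n)$ not in the exceptional list, to a finite family of auxiliary ternary Diophantine equations whose solutions must be shown to produce only the known case $(k,n,x,y) = (2,2,24,70)$.

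For each such auxiliary equation I would attach a Frey elliptic curve---or, when the relevant factor has genus $\geq 2$, a Frey abelian variety in the Darmon--Granville framework---to any putative solution. Modularity over $\mathbb{Q}$, together with Ribet's level-lowering theorem, then forces the mod-$\ell$ Galois representation of the Frey curve to arise from a newform of a very small, explicitly predictable level and weight. A finite computation---listing newforms at that level and comparing Frobenius traces at small primes, supplemented by Kraus-type image-of-inertia arguments to handle small primes of bad reduction---should eliminate the remaining candidates for each concrete $(k,n)$.

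The main obstacle is handling $k$ and $n$ varying jointly, rather than one pair $(k,n)$ at a time. When $n=2$ the standard Frey-curve machinery is essentially unavailable, so one must analyse the hyperelliptic curve $y^2 = S_k(x)$ directly by Chabauty--Coleman; this requires case-by-case Mordell--Weil rank computations of the Jacobian and is far from uniform in $k$. When $n$ is large but $k$ is small, the modular method is effective but technical for each new $k$: the substantial work needed in the present paper merely for $k=5$ (with an extra parameter $d$) is already persuasive evidence of this. A proof uniform in $k$ would require Frey varieties whose conductor can be bounded independently of $k$, together with modularity results beyond what is currently available over number fields. The realistic version of this proposal is therefore an incremental programme---proving the conjecture for each fixed small $k$ in turn by the strategy above---rather than a single uniform attack, which explains why the conjecture remains open nearly seventy years after it was posed.
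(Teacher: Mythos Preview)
The statement you are addressing is Conjecture~\ref{Main_Conj}, Sch\"affer's conjecture. It is \emph{not} a theorem in the paper, and the paper makes no attempt to prove it; it is recorded purely as historical motivation for studying equations of the shape~\eqref{eq0c} and~\eqref{eq0e}. Consequently there is no ``paper's own proof'' against which to compare your proposal.

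Your write-up is not a proof but a strategic outline, and you acknowledge as much in your final paragraph (``which explains why the conjecture remains open''). That self-assessment is accurate: the proposal sketches a plausible programme (Faulhaber factorisation, descent to ternary equations, Frey curves and level-lowering, Chabauty for $n=2$) but supplies no actual argument for any fixed $(k,n)$, let alone a uniform one. In particular, the step ``a finite computation \dots\ should eliminate the remaining candidates for each concrete $(k,n)$'' is precisely the content of the conjecture once the reductions are made, and it is known to be genuinely hard even for single small values of $k$; the present paper's effort for the related $k=5$ problem illustrates this. So while nothing you wrote is mathematically incorrect as a description of the available toolkit, it does not constitute a proof, and you should not present it as one.
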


The equation \eqref{eq0a} and its generalizations have a long and rich history. Bennett, Gy\H{o}ry, Pint\'{e}r \cite{BennettGyoryPinter04} proved the Conjecture \ref{Main_Conj} for arbitrary $n$ and $k\leq 11$. Following and extending the approach of \cite{BennettGyoryPinter04}, Pint\'{e}r \cite{ArgaezPatel19} proved Conjecture \ref{Main_Conj} for odd values of $k$ with $1 \leq k\leq 170$ and even values of $n$.

As a natural generalization of equation \eqref{eq0a}, Zhang and Bai \cite{ZhangBai13} considered the equation
\begin{equation}\label{eq0c}
(x+1)^{k}+(x+2)^{k}+\cdots+(x+r)^{k}=y^{n}.
\end{equation}
The equation \eqref{eq0c} is comparatively more difficult than the equation \eqref{eq0a}. There has been some progress on equation \eqref{eq0c} for particular values of $k$, $n$ and $r$. Below we list down all results to the best of our knowledge obtained for equation \eqref{eq0c} .

We first list down the results obtained for power sums of a range of values of $r$. Stroeker \cite{Stroeker95} completely solved the equation \eqref{eq0c} for $k=3$, $n=2$ and $2\leq r\leq 50$. Recently, Bennett, Patel and Siksek \cite{BennettPatelSiksek17} extended the result of Stroeker for $n \geq 3$. Zhang and Bai \cite{ZhangBai13} solved the equation \eqref{eq0c} for $k=2$ and $r=x$. Bartoli and Soydan \cite{Soydan17,BartoliSoydan20} extended the result of Zhang and Bai \cite{ZhangBai13} for $k \geq 2$ and $r = lx$ with $l \geq 2$.

The equation \eqref{eq0c} has also been studied for fixed $r$. Cassels \cite{Cassels85} solved the equation \eqref{eq0c} completely for $n = 2$, $r = 3$ and $k = 3$. Zhang \cite{Zhang14} subsequently considered the equation \eqref{eq0c} for $r = 3$ and he solved it completely for $k \in \{2,3,4 \}$. Recently, Bennet, Patel and Siksek \cite{BennettPatelSiksek16} extended Zhang's result, by completely solving  equation \eqref{eq0c} for $r = 3$ in the cases $k = 5$ and $k = 6$. Several authors have also studied equations \eqref{eq0a}, \eqref{eq0c} and its variants using a variety of classical and modern techniques (see e.g. \cite{BennettGyoryPinter04, BerczesPinkSavasSoydan18, Brinza84, DasDeyMajiRout20, DasDeyRout20, GyoryTijdemanVoorhoeve80, Hajdu15, JacobsonPinterWalsh03}).

As a natural generalization of all the above results many mathematicians have recently studied power sums in arithmetic progression. They have considered the equation
\begin{equation}\label{eq0d}
(x+d)^k + (x+2d)^k + \cdots + (x+rd)^k = y^n, \ x,y,d\in \mathbb{Z},\ r,k \in \mathbb{Z}_{\geq0},\ n\ \text{prime}.
\end{equation}

In this paper we are particularly interested in the case $r =3$, in particular for the equation
\begin{equation}\label{eq0e}
(x-d)^k + x^k + (x+d)^k = y^n, \quad n\geq 2.
\end{equation}

We mention some related results for $k\leq 4$. Koutsianas \cite{Koutsianas19} studied the equation \eqref{eq0e} for $k = 2$, where $d$ is of the form $p^b$ with $p$ a suitable prime. Koutsianas and Patel \cite{KoutsianasPatel18} completely solved the equation \eqref{eq0e} for $k = 2$ and for all values of $1 \leq d \leq 5000$, using the characterization of primitive divisors in Lehmer sequences by Bilu-Hanrot-Voutier \cite{BiluHanrotVoutier01}. For $1 \leq d \leq 10^6$ and $k=3$,  Arg\'{a}ez-Garc\'{i}a and Patel \cite{ArgaezPatel19, ArgaezPatel20} and Arg\'{a}ez-Garc\'{i}a \cite{Argaez19} studied the equation \eqref{eq0d} for $r=3$, $7$ and $r=5$, respectively. For $k=4$ the equation \eqref{eq0e} was solved by Zhang \cite{Zhang17} for some particular choices of $d$ and Langen \cite{Langen19} under the assumption $\gcd(x,d)=1$.  For further reference we include all known results on equation \eqref{eq0d} in Table \ref{tab0}.
\begin{center}
\begin{table}[h]
 \begin{tabular}{ || p{3cm} | c | c | c | p{6cm} ||}
    \hline \hline
    $d$ & $r$ & $k$ & $n$ & References \\ \hline \hline
    $1$ & $x$ & $2$ & $\geq 2$ & Zhang and Bai \cite{ZhangBai13}  \\ \hline
    $1$ & $3$ & $3$ & $2$ & Cassels \cite{Cassels85}\\ \hline
    $1$ & $3$ & $\{2,3,4 \}$ & $\geq 2$ & Zhang \cite{Zhang14}\\ \hline
    $1$ & $3$ & $\{5,6 \}$ & $\geq 2$ & Bennett, Patel and Siksek \cite{BennettPatelSiksek16}\\ \hline
    $1$ & $\{1,\cdots,50\}$ & $3$ & $\geq 2$ & Bennett, Patel and Siksek \cite{BennettPatelSiksek17}\\ \hline
    $1$ & $\{2,\cdots,10\}$ & $2$ & $\geq 2$ & Patel \cite{Patel18}\\ \hline
    a suitable set of prime powers & $3$ & $2$ & $\geq 7$ & Koutsianas \cite{Koutsianas19}\\ \hline
    composed of a suitable set of prime powers& $3$ & $4$ & $\geq 11$ & Zhang \cite{Zhang17}\\ \hline
    $\gcd(x,d)=1$ & $3$ & $4$ & $\geq 2$ & Langen \cite{Langen19}\\ \hline 
    $\{1,\cdots, 10^4\}$ & $\{2,\cdots,10\}$ & $2$ & $\geq 2$ & Kundu and Patel \cite{KunduPatel19}\\ \hline
    $\{1,\cdots, 10^6\}$ & $3,7$ & $3$ & $\geq 5$ & Arg\'{a}ez-Garci\'{a} and Patel \cite{ArgaezPatel19, ArgaezPatel20}\\ \hline
    $\{1,\cdots, 10^6\}$ & $5$ & $3$ & $\geq 5$ & Arg\'{a}ez-Garci\'{a} \cite{Argaez19}\\ \hline
    $\{1,\cdots, 5000\}$ & $3$ & $2$ & $\geq 2$ & Koutsianas and Patel \cite{KoutsianasPatel18}\\ \hline\hline
   \end{tabular}
  \caption{Notable results on solutions of special cases of \eqref{eq0d}.} 
\end{table}\label{tab0}
\end{center}

The general equation \eqref{eq0e} for $k\geq 5$ is a difficult problem. In this paper, we study the Diophantine equation
\begin{equation}\label{eq:main_k_5}
(x - d)^5 + x^5 + (x + d)^5 = z^n, \quad n\geq 2,~xz\neq 0.
\end{equation}

Recently, Bennett and Koutsianas \cite{BennettKoutsianas20} solved equation \eqref{eq:main_k_5} with the natural assumption $\gcd(x,d)=1$. This assumption enables them to factorize the left-hand side of \eqref{eq:main_k_5} and reduce the problem to the resolution of Fermat type equations of signature $(n,n,2)$ with coefficients independent of $d$. In the general case, the coefficients of the Fermat type equations have prime factors that divide $10\cdot\gcd(x,d)$. Therefore, without any restrictions to $\gcd(x,d)$ we are not able to solve \eqref{eq:main_k_5} with the current techniques. The existence of infinite family of solutions for small exponents $n$, for instance the solutions $(x,d,y,n) = (ra^4, sa^4, a^3, 7),$  $(ra^6, sa^6, a, 31)$ and $(ra^8, sa^8, a, 41)$ where $r,s\in\Z^*$ and\footnote{For example, the quadruples $(x,d,y,n)=(33^4, 33^4, 33^3, 7)$, $(2\cdot{276}^6, {276}^6, 276, 31)$ and $(243^8,2\cdot 243^8,243,41)$ are the solutions to the equation \eqref{eq0e} for the three smallest positive values of $a$.} $a=(r-s)^5+r^5+(r+s)^5$, show that the complete resolution of \eqref{eq:main_k_5} is a hard and challenging problem.

From the above we understand that if we want to study the resolution of \eqref{eq:main_k_5} without the assumption $\gcd(x,d)=1$ and having a freedom of the choice of $d$ we have to fix the prime factors of $d$. Let assume that $d$ is divisible by primes that lie in a fixed finite set $S$. As we mentioned in the previous paragraph the coefficients of the Fermat type equations will depend on $S\cup\{2,5\}$. Because the primes $2$ and $5$ always show up for any choice of $S$ it is very natural to study the resolution of \eqref{eq:main_k_5} for $S=\{2,5\}$, i.e. when $d=2^a5^b$ with $a,b\geq 0$. We prove the following which is a generalization of \cite[Theorem 1]{BennettPatelSiksek16} for the case $\gcd(x,d)\geq 1$.

\begin{theorem} \label{Thm1}
Let $n\geq 2$ be an integer and $d = 2^a 5^b$ with integers $a,b \geq 0$. 
Then, the equation
\[
(x - d)^5 + x^5 + (x + d)^5 = z^n,
\]
is solvable in integers $x,z$ with $xz\neq 0$ only if $n=5$ and $a\geq 1$,
in which case all integer solutions are given by  $(x,z) = \pm(d/2,3d/2)$.
\end{theorem}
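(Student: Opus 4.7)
The plan is to extend the modular strategy of \cite{BennettKoutsianas20} (which treated the coprime subcase $\gcd(x,d)=1$) to handle the case of a non-trivial common factor, using that now $\gcd(x,d)$ is necessarily of the form $2^{\alpha}5^{\beta}$. As a first step, since a solution for a composite exponent $n=pm$ yields a solution for the prime exponent $p$ (replacing $z$ by $z^{m}$), I would assume throughout that $n$ is prime and deal with $n=4$ via $n=2$. Expanding, the equation becomes
\[
x\bigl(3x^{4}+20x^{2}d^{2}+10d^{4}\bigr)=z^{n},
\]
and with $g=\gcd(x,d)=2^{\alpha}5^{\beta}$, $x=gu$, $d=gv$, $\gcd(u,v)=1$, it reduces to
\[
g^{5}\,u\,Q(u,v)=z^{n},\qquad Q(u,v)=3u^{4}+20u^{2}v^{2}+10v^{4}.
\]
A short computation gives $\gcd(u,Q(u,v))=\gcd(u,10v^{4})=\gcd(u,10)$, so there are only finitely many subcases according to which of $2,5$ divide $v$ and which divide $u$.

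In each subcase the coprime factorization of $g^{5}\,u\,Q(u,v)=z^{n}$ forces $u$ and $Q(u,v)$ to be, up to explicit powers of $2,3,5$, perfect $n$-th powers. As in \cite{BennettPatelSiksek16,BennettKoutsianas20}, this translates into a Fermat-type equation of signature $(n,n,2)$ with coefficients supported on $\{2,3,5\}$. To each such equation I would attach a Frey elliptic curve over $\mathbb{Q}$, verify absolute irreducibility of its mod-$n$ Galois representation for $n\ge 7$, invoke modularity and Ribet's level-lowering theorem, and eliminate the resulting list of candidate weight-$2$ newforms at the predicted levels by comparing traces of $\mathrm{Frob}_{p}$ at a few small auxiliary primes of good or multiplicative reduction. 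The remaining small exponents are handled separately: $n=2$ yields a genus-two curve that I would attack by $2$-descent together with Chabauty; $n=3$ gives a curve whose integral points can be listed via descent; and $n=5$, where the exceptional family lives, reduces after the coprime factorization to a short list of Thue(-Mahler) equations in $(u,v)$ which recover exactly $(x,z)=\pm(d/2,3d/2)$ when $a\ge 1$ and have no solutions when $a=0$.

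The main obstacle, I expect, is the combinatorial proliferation of subcases arising from the $2$- and $5$-adic valuations of $g$, $u$ and $Q(u,v)$: each combination carries its own Frey curve, its own predicted conductor, and its own list of newforms to eliminate, and the bookkeeping at the prime $2$ is especially delicate because the reduction type of the Frey curve there varies subtly with these valuations. A secondary difficulty is the $n=5$ analysis, where the modular method is insensitive and an ad hoc Thue-Mahler argument is needed both to recover the exceptional solutions and to prove their maximality.
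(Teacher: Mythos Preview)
Your plan is essentially the paper's: factor $xP=z^n$ with $P=3x^4+20d^2x^2+10d^4$, split into subcases by the $2$- and $5$-adic valuations, derive signature-$(n,n,2)$ equations, run the Bennett--Skinner recipes for $n\ge 7$, and handle $n\in\{2,3,5\}$ separately. Two points need correction.

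First, the coefficients are supported on $\{2,3,5,7\}$, not $\{2,3,5\}$: the $(n,n,2)$ shape is obtained from the identities
\[
10(x^2+d^2)^2-7x^4=P,\qquad (3x^2+10d^2)^2-70d^4=3P,
\]
so $7$ is unavoidable, and each subcase yields \emph{two} such equations and hence two Frey curves attached to the same putative solution. The paper exploits this via a multi-Frey elimination, taking $\gcd(R_\ell(f),R_\ell'(g))$ across the two curves before bounding $n$. With a single Frey curve per subcase, as your proposal suggests, the largest newform spaces involved (level $2^8\cdot 3\cdot 5^2\cdot 7=134400$, dimension $1824$) are very unlikely to be eliminated, and your plan would stall at that step.

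Second, the small-$n$ cases are simpler than you propose. For $n=5$, dividing your relation $g^5uQ(u,v)=z^5$ by $g^5$ gives $(u-v)^5+u^5+(u+v)^5=(z/g)^5$ with $\gcd(u,v)=1$, which is exactly the coprime case already solved in \cite{BennettKoutsianas20}; no Thue--Mahler step is needed. For $n=2$, three of the four subcases die by congruences mod $5$ or mod $8$, and the remaining one reduces to a rank-zero elliptic curve rather than a genus-two Chabauty computation (note also that your ``genus-two curve'' is really an infinite family indexed by $d$). For $n=3$, each subcase reduces to the computation of $S$-integral points, $S\subseteq\{2,5\}$, on explicit elliptic curves of rank at most $2$.
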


The paper is organized as follows. In Section \ref{sec:primitive_solutions} we associate to a solution $(x,z)$ of \eqref{eq:main_k_5} and $d=2^a5^b$ two Fermat type equations of signature $(n,n,2)$ with pairwise coprime terms. In Section \ref{sec:small_n} we resolve \eqref{eq:main_k_5} for $n=2$, $3$ and $5$ using a variety of elementary and advanced techniques. In Section \ref{sec:Frey_curves} we explain how we can apply the modular method and the recipes in \cite{BennettSkinner04} to resolve \eqref{eq:main_k_5} when $n\geq 7$ is a prime. Finally, in Section \ref{sec:Proof large n} we complete the proof of Theorem \ref{Thm1} for $n\geq 7$ and we give the necessary details of the computations.

\begin{remark}
In principle, the method we describe in this paper will work for any choice of the set $S$ when $S$ contains $2$ and $5$. However, the required computations for the spaces of newforms are beyond to the current computer power.
\end{remark}

The computations of this paper have been accomplished in computer software \texttt{Magma} \cite{Magma} and the code for all the computations in this paper can be found at
\begin{center}
\url{https://github.com/akoutsianas/5th_powers}.
\end{center}

\section*{Acknowledgements}

The first author wants to thank Professor Cameron Stewart and Professor Yu-Ru Liu  for providing funding to third authors' visit to Waterloo and the Department of Mathematics at the University of Waterloo for the very nice working environment where part of this project was accomplished. The first author also wants to thank Professor Nikos Tzanakis for visiting him to University of Crete, Heraklion and for many fruitful discussions. The second author's research is supported by National Board for Higher Mathematics, India.  The third author is grateful to Professor John Cremona for providing access to the servers of the Number Theory group of Warwick Mathematics Institute where all the computations took place.

\section{Preliminaries}\label{sec:primitive_solutions}
We have the equation

\begin{equation}\label{eq general}
(x - d)^5 + x^5 + (x + d)^5 = z^n, \quad d = 2^a 5^b,\; a,b\geq 0,
\quad xz\neq 0.
\end{equation}

Clearly, in order to prove Theorem \ref{Thm1} we may assume that $n$ is a prime number. Because \eqref{eq general} can be rewritten as
\begin{equation}\label{eq:reduce_initial}
x(3x^4 + 20d^2x^2 +10d^4) = z^n,
\end{equation}
it suffices to consider only the case in which both $x$ and $z$ are positive.

Let $\nu_p(N)$ denotes the $p$-adic valuation of an integer $N$, where $p$ is a prime. We set 
\[
x=2^{\alpha}5^{\beta}x_1,\; \gcd(x_1,10)=1,
                   \quad P=3x^4 + 20d^2x^2 +10d^4,
       \quad z=2^u5^vZ,\; \gcd(Z,10)=1,
\]
hence
\begin{equation}   \label{eq xP=nth pow}
2^{\alpha}5^{\beta}x_1P=2^{nu}5^{nv}Z^n.
\end{equation}

Since $\gcd(x,P)=\gcd(x,10d^4)$, it follows that 
\[
\gcd(x,P)=2^{\min\{4a+1,\alpha\}}\cdot 5^{\min\{4b+1,\beta\}}
\quad\text{and}\quad \gcd(x_1,P)=1.
\]
Let us put $P=2^{\nu_2(P)}5^{\nu_5(P)}P_1$. Clearly, $\gcd(x_1,P_1)=1$ and $\gcd(P_1,10)=1$. 
Thus, $P_1=2^{-\nu_2(P)}5^{-\nu_5(P)}P$. Using these in \eqref{eq xP=nth pow} we obtain
$2^{\al+\nu_2(P)}5^{\be+\nu_5(P)}x_1P_1=2^{nu}5^{nv}Z^n$, where $\gcd(x_1P_1Z,10)=1$. 
It follows that 
\[
    \al+\nu_2(P)=nu,\quad \be+\nu_5(P)=nv,
\]
and $x_1P_1=Z^n$. From $\gcd(x_1,P_1)=1$ it follows that
\begin{equation}  \label{eq x1P1=Z^n}
    x_1=z_1^n,\; P_1=z_2^n,\; Z=z_1z_2,\; \gcd(z_1,z_2)=1,\; \gcd(z_1z_2,10)=1.
\end{equation}

We rewrite equation \eqref{eq:reduce_initial} equivalently in the following two ways:
\begin{align}
10(x^2+d^2)^2-7x^4 & = P,\label{identity I}\\
(3x^2+ 10d^2)^2 - 70 d^4 & = 3P. \label{identity II}
\end{align}
Noting that
\begin{equation}\label{eq:P_with_a_b}
P=3\cdot 2^{4\alpha}5^{4\beta}x_1^4+2^{2a+2\alpha+2}5^{2b+2\beta+1}x_1^2 
                     + 2^{4a+1}5^{4b+1},
\end{equation}
we consider four cases according to the values of $a,\alpha$ and $b,\beta$.

\paragraph{\bf Case I} Suppose $4\alpha<4a+1$ and $4\beta<4b+1$. This is 
equivalent to $a\geq \alpha$ and $b\geq \beta$ and from 
\eqref{eq:P_with_a_b} we get $v_2(P) = 4\alpha$ and $v_5(P) = 4\beta$. 
Because $x = 2^{\alpha}5^{\beta}z_1^n$, $d=2^a5^b$ and 
$P=2^{4\alpha}5^{4\beta}z_2^n$ dividing equations \eqref {identity I} 
and \eqref{identity II} by $2^{4\alpha} 5^{4\beta}$ 
we obtain the following two equations
\begin{align}
& {z_2}^n + 7{z_1}^{4n} 
          = 10 ( {z_1} ^{2n} + 2^{2a-2\alpha}5^{2b-2\beta})^2 
                                          \label{mult 1-1},\\
& 3{z_2}^n + 7\cdot 2^{4(a-\alpha)+1} 5^{4(b-\beta)+1}= 
           (3{z_1}^{2n}+ 2^{2a-2\alpha+1} 5^{2b-2\beta+1})^2.
                                             \label{mult 1-2}
\end{align}

\paragraph{\bf Case II} Suppose $4\alpha<4a+1$ and $4\beta>4b+1$. This is 
equivalent to $a\geq \alpha$ and $\beta\geq b+1$ and from 
\eqref{eq:P_with_a_b} we get that $v_2(P)= 4\alpha$ and $v_5(P)= 4b+1$. 
Because 
$x = 2^{\alpha}5^{\beta}z_1^n$, $d=2^a5^b$ and 
$P=2^{4\alpha}5^{4b + 1}z_2^n$ dividing equations \eqref {identity I} 
and \eqref{identity II} by $2^{4\alpha} 5^{4b + 1}$ 
we obtain the following two equations
\begin{align}
& {z_2}^n + 7\cdot 5^{4(\beta-b)-1}{z_1}^{4n} 
    = 2 ( 5^{2\beta-2b}{z_1} ^{2n} + 2^{2a-2\alpha})^2
                                          \label{mult 2-1},\\
& 3{z_2}^n + 7\cdot 2^{4(a-\alpha)+1} 
        = 5(3\cdot 5^{2\beta-2b-1}{z_1}^{2n}+ 2^{2a-2\alpha+1} )^2.
                              \label{mult 2-2}
\end{align}
\paragraph{\bf Case III} Suppose $4\alpha>4a+1$ and $4\beta<4b+1$. This is 
equivalent to $\alpha\geq a+1$ and $b\geq \beta$ and from 
\eqref{eq:P_with_a_b} we get that $v_2(P)= 4a+1$ and $v_5(P)= 4\beta $. 
Because $x = 2^{\alpha}5^{\beta}z_1^n$, $d=2^a5^b$ and 
$P=2^{4a + 1}5^{4\beta}z_2^n$ dividing equations \eqref {identity I} and 
\eqref{identity II} by $2^{4a + 1}5^{4\beta}$ 
we obtain the following two equations
\begin{align}
& {z_2}^n + 7\cdot 2^{4(\alpha-a)-1}{z_1}^{4n} 
        = 5 ( 2^{2\alpha-2a}{z_1} ^{2n} + 5^{2b-2\beta})^2 
                                       \label{mult 3-1},\\
& 3{z_2}^n + 7\cdot 5^{4(b-\beta)+1}
             = 2(3\cdot 2^{2\alpha-2a-1}{z_1}^{2n}+ 5^{2b-2\beta+1})^2. 
                                          \label{mult 3-2}
\end{align}

\paragraph{\bf Case IV} Suppose $4\alpha>4a+1$ and $4\beta>4b+1$. 
This is equivalent to $\alpha\geq a+1$ and $\beta\geq b+1$ and from
\eqref{eq:P_with_a_b} we get that $v_2(P)= 4a+ 1$ and $v_5(P)= 4b+1$. Because 
$x = 2^{\alpha}5^{\beta}z_1^n$, $d=2^a5^b$ and $P=2^{4a + 1}5^{4b+1}z_2^n$, 
dividing equations \eqref {identity I} and \eqref{identity II} by 
$2^{4a + 1}5^{4b+1}$ we obtain the following two equations
\begin{align}
 & z_2^n + 7\cdot 2^{4(\alpha-a)-1} 5^{4(\beta-b)-1}{z_1}^{4n}   
                      = (2^{2\alpha-2a} 5^{2\beta-2b}z_1^{2n} + 1)^2,                                                    \label{mult 4-1} \\
& 3z_2^n + 7 = 10(3\cdot 2^{2\alpha-2a-1} 
5^{2\beta-2b-1}{z_1}^{2n}+1)^2.\label{mult 4-2}
\end{align}

Equations \eqref{mult 1-1}-\eqref{mult 4-2} all have
the general shape of a ternary generalized Fermat-type 
equation of type $(n,n,2)$, namely,
\begin{equation} \label{eq Ben-Skin}
Aa^n + Bb^n = Cc^2 .     
\end{equation}
where $A,a,B,b,C,c$ are shown in Table \ref{tab:AaBbCc}.
We will need to view our equations as such in Sections 
\ref{sec:Frey_curves} and \ref{sec:Proof large n} where we 
treat the case $n\geq 7$ by applying the recipes  of 
\cite{BennettSkinner04}. For the application of the results
therein we need that $Aa,Bb,Cc$ be pairwise relatively prime.
Since we have already seen that $\gcd(z_1z_2,10)=1$ and
$\gcd(z_1,z_2)=1$, the said requirement amounts in proving 
that $z_1,z_2$ and the $c$'s in Table \ref{tab:AaBbCc}
 are not divisible by $7$. This is proved as follows:
First we observe that, if in any equation 
\eqref{mult 1-1}--\eqref{mult 4-2} $z_1$ is divisible by $7$,
then the same is true also for $z_2$, which contradicts 
$\gcd(z_1,z_2)=1$; thus $7\nmid z_1z_2$. Further, it is 
easily checked that, for any $c$ in Table \ref{tab:AaBbCc},
the divisibility of $c$ by $7$ implies that $-1$ is a quadratic
residue $\bmod\,7$ which is absurd.

\begin{table}[!h]
  \centering
     \begin{tabular}{||c|c|c|c|c|c|c||}
        \hline
 Case-Eq. & $A$ & $a$ & $B$ & $b$ & $C$ & $c$ \\
        \hline\hline
(I)-\eqref{mult 1-1} & $1$ & $z_2$ & $7$ & $z_1^4$ & $10$ 
    & $z_1^{2n}+2^{2(a-\al)}\cdot 5^{2(b-\be)}$ 
    \\ \hline
(I)-\eqref{mult 1-2} & $3$ & $z_2$ 
 & $7\cdot 2^{4(a-\al)+1}\cdot 5^{4(b-\be)+1}$ & $1$ & $1$ 
    & $-3z_1^{2n}-2^{2(a-\al)+1}\cdot 5^{2(b-\be)+1}$ 
    \\ \hline
(II)-\eqref{mult 2-1} & $1$ & $z_2$ & $7\cdot 5^{4(\be-b)-1}$ 
     & $z_1^4$ & $2$ 
    & $5^{2(\be-b)}z_1^{2n}+2^{2(a-\al)}$ 
    \\ \hline
(II)-\eqref{mult 2-2} & $3$ & $z_2$ 
 & $7\cdot 2^{4(a-\al)+1}$ & $1$ & $5$ 
    & $-3\cdot 5^{2(\be-b)-1}z_1^{2n}-2^{2(a-\al)+1}$ 
    \\ \hline     
(III)-\eqref{mult 3-1} & $1$ & $z_2$ & $7\cdot 2^{4(\al-a)-1}$ 
     & $z_1^4$ & $5$ 
    & $2^{2(\al-a)}z_1^{2n}+5^{2(b-\be)}$ 
    \\ \hline
(III)-\eqref{mult 3-2} & $3$ & $z_2$ 
 & $7\cdot 5^{4(b-\be)+1}$ & $1$ & $2$ 
    & $3\cdot 2^{2(\al-a)-1}z_1^{2n}+5^{2(b-\be)+1}$ 
    \\ \hline  
(IV)-\eqref{mult 4-1} & $1$ & $z_2$ 
& $7\cdot 2^{4(\al-a)-1}\cdot 5^{4(\be-b)-1}$ 
     & $z_1^4$ & $1$ 
    & $2^{2(\al-a)}\cdot 5^{2(b-\be)}z_1^{2n}+1$ 
    \\ \hline
(IV)-\eqref{mult 4-2} & $3$ & $z_2$ 
 & $7$ & $1$ & $10$ 
    & $3\cdot 2^{2(\al-a)-1}\cdot 5^{2(b-\be)+1}z_1^{2n}+1$ 
    \\ \hline            
\end{tabular}
\caption{Parameters needed for the application of the recipes
    of \cite{BennettSkinner04}.}
    \label{tab:AaBbCc}
\end{table}

\section{Solving equation \eqref{eq general} when $n = 2,3,5$}\label{sec:small_n}

In this section we prove that equation \eqref{eq general} is impossible when 
$n=2,3$ and solve the equation when $n=5$.

\subsection{The case $n=2$}\label{subsec n=2}

In this case equation \eqref{eq general} becomes
\begin{equation}\label{eq initial n=2}
(x - d)^5 + x^5 + (x + d)^5 = z^2, \quad d = 2^a 5^b,\; a,b\geq 0,
\quad xz\neq 0.
\end{equation}
We have to consider each case (I)-(IV) 
(as defined in Section \ref{sec:primitive_solutions}) separately.

Case (I): By \eqref{mult 1-1} we have $z_2^2 + 7z_1^8\equiv 0\pmod{5}$. 
However, from $5\nmid z_1z_2$ it follows that 
$z_2^2 + 7z_1^8\equiv 1,3\pmod{5}$ and we get a contradiction.

Case (II): By \eqref{mult 2-2} we have 
$3z_2^2 + 7\cdot 2^{4(a-\alpha) + 1}\equiv 0\pmod{5}$. 
However, 
$3z_2^2 + 7\cdot 2^{4(a-\alpha) + 1}\equiv 3z_2^2+14\not\equiv 0\pmod{5}$ 
(actually, $3z_2^2+14\equiv \mbox{$1$ or $2\pmod{5}$}$ because $5\nmid z_2$)
and we get a contradiction.

Case (III): By \eqref{mult 3-1} and $2\nmid z_1$ we get 
$z_2^2 + 7\cdot 2^{4(\alpha-a) - 1}\equiv 5\pmod{8}$. On the other hand, from 
$2\nmid z_2$ and $4(\alpha-a) - 1\geq 3$ we have 
$z_2^2 + 7\cdot 2^{4(\alpha-a) - 1}\equiv 1\pmod{8}$ which is a contradiction.

Case (IV): Putting $z_1^2=x_1$ in equation \eqref{mult 4-1} leads us to the 
equation
\[
3\cdot 2^{4(\al-a)-1}5^{4(\be-b)-1}x_1^4+2^{2(\al-a)+1}5^{2(\be-b)}x_1^2+1
={z_2}^{2}.
\]
We put $4(\al-a)-1=4k+3$ and $4(\be-b)-1=4l+3$, where $k,l\geq 0$ so that the
above equation becomes
\[
3000 (2^k5^lx_1)^4 + 200(2^k5^lx_1)^2+1 = {z_2}^{2}.
\]
The elliptic curve defined by $3000X^4+200X^2+1=Y^2$ is isomorphic to the elliptic curve with Cremona label 134400ed1 which has rank zero and torsion subgroup isomorphic to $\Z/2\Z$, hence $(X,Y)=(0,\pm 1)$ are its only affine rational point. Clearly, this point does not provide us with an acceptable pair $(x_1,z_2)$.

Thus we conclude that equation \eqref{eq initial n=2} has no solutions, hence we have
proved the following:
\begin{proposition} \label{prop n=2}
Equation \eqref{eq general} with $n=2$ is impossible.
\end{proposition}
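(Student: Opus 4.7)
The plan is to dispatch the four sub-cases (I)--(IV) of Section~\ref{sec:primitive_solutions} separately, working with the equations \eqref{mult 1-1}--\eqref{mult 4-2} specialized to $n=2$. In three of the four sub-cases I expect elementary congruences modulo a small prime to suffice; only Case~(IV) appears to require heavier machinery, so that is where I anticipate the real obstacle.

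For Case~(I), I would reduce \eqref{mult 1-1} modulo $5$. Since the right-hand side is divisible by $10$ and $\gcd(z_1z_2,10)=1$, Fermat's little theorem yields $z_1^8\equiv 1\pmod{5}$, while $z_2^2\in\{1,4\}\pmod{5}$; hence $z_2^2+7z_1^8\in\{3,1\}\pmod{5}$, contradicting divisibility by $5$. Case~(II) is analogous but uses \eqref{mult 2-2} modulo~$5$: since $16\equiv 1\pmod{5}$, the term $7\cdot 2^{4(a-\al)+1}$ is $\equiv 4\pmod{5}$, whereas $3z_2^2\in\{3,2\}\pmod{5}$, so the left-hand side is never divisible by $5$. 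For Case~(III), I would look at \eqref{mult 3-1} modulo~$8$. With $\al-a\ge 1$ the exponent $4(\al-a)-1\ge 3$, so $7\cdot 2^{4(\al-a)-1}z_1^{4n}\equiv 0\pmod{8}$ and the left-hand side is $\equiv 1\pmod{8}$ (the square of the odd number $z_2$); on the other hand, $2^{2(\al-a)}z_1^{2n}$ is even while $5^{2(b-\be)}$ is odd, so the right-hand side is $5$ times an odd square and therefore $\equiv 5\pmod{8}$, a contradiction.

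The substantive case is Case~(IV), where the natural small-modulus congruences no longer obstruct solutions. My plan is to expand the square on the right of \eqref{mult 4-1}, subtract off the leading term, and use the identity $2\cdot 5-7=3$ to arrive at
\[
z_2^{\,2} \;=\; 3\cdot 2^{4(\al-a)-1}5^{4(\be-b)-1}z_1^{8} \;+\; 2^{2(\al-a)+1}5^{2(\be-b)}z_1^{4} \;+\; 1.
\]
Writing $\al-a=k+1$ and $\be-b=l+1$ with $k,l\ge 0$ and setting $X=2^{k}5^{l}z_1^{2}$, this collapses to the single quartic
\[
z_2^{\,2} \;=\; 3000\,X^{4}+200\,X^{2}+1.
\]
This defines an elliptic curve, which I would put into Weierstrass form and identify via Cremona's tables (or \texttt{Magma}); the goal is a Mordell--Weil computation showing rank zero and torsion $\Z/2\Z$, so that the only affine rational points are $(X,z_2)=(0,\pm 1)$. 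These force $z_1=0$ and hence $x=0$, contradicting $xz\neq 0$. The heart of the proof is thus the rank and torsion computation for this one explicit curve.
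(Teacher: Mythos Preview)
Your proposal is correct and follows essentially the same argument as the paper: the same congruences modulo $5$, $5$, and $8$ in Cases~(I)--(III), and in Case~(IV) the same reduction to the quartic $z_2^{2}=3000X^{4}+200X^{2}+1$ followed by the rank/torsion computation (the paper identifies the curve as Cremona \texttt{134400ed1}).
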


\subsection{The case $n=3$}\label{subsec n=3}

In this case equation \eqref{eq general} becomes
\begin{equation}\label{eq initial n=3}
(x - d)^5 + x^5 + (x + d)^5 = z^3, \quad d = 2^a 5^b,\; a,b\geq 0.
\end{equation}
In accordance with Section \ref{sec:primitive_solutions}
we examine each case (I) through (IV) separately.

Case (I): Elementary symbolic computations transform  \eqref{mult 1-1} 
into the equivalent equation
\[
3\left(\frac{z_1^6}{2^{2(a-\al)}5^{2(b-\be)}}\right)^2+
20\left(\frac{z_1^6}{2^{2(a-\al)}5^{2(b-\be)}}\right) + 10
=\frac{1}{2^{a-\al}5^{b-\be}}\left(\frac{z_2}{2^{a-\al}5^{b-\be}}\right)^3.
\]
We rewrite this as
\begin{equation} \label{eq EC I}
3Y^2+20Y+10=\frac{1}{2^{a-\alpha}5^{b-\beta}}X^3, 
\quad
Y=\frac{z_1^6}{2^{2(a-\alpha)}5^{2(b-\beta)}},\quad
X=\frac{z_2}{2^{a-\alpha}5^{b-\beta}}.
\end{equation}

Case (II): Elementary symbolic computations transform  \eqref{mult 2-1} 
into the equivalent equation
\[
3\left(\frac{5^{2(b-\be)}z_1^6}{2^{2(a-\al)}}\right)^2+
20\left(\frac{5^{2(b-\be)}z_1^6}{2^{2(a-\al)}}\right) + 10
=\frac{5}{2^{a-\al}}\left(\frac{z_2}{2^{a-\al}}\right)^3.
\]
We rewrite this as
\begin{equation} \label{eq EC II}
3Y^2+20Y+10=\frac{5}{2^{a-\alpha}}X^3, 
\quad
Y=\frac{5^{2(\beta-b)}}{2^{2(a-\alpha)}}z_1^6,\quad
X=\frac{z_2}{2^{a-\alpha}}.
\end{equation}

Case (III): Elementary symbolic computations transform  \eqref{mult 3-1} 
into the equivalent equation
\[
3\left(\frac{2^{2(a-\al)}z_1^6}{5^{2(b-\be)}}\right)^2+
20\left(\frac{2^{2(a-\al)}z_1^6}{5^{2(b-\be)}}\right) + 10
=\frac{2}{5^{b-\be}}\left(\frac{z_2}{5^{b-\be}}\right)^3,
\]
which we rewrite as
\begin{equation} \label{eq EC III}
3Y^2+20Y+10=\frac{2}{5^{b-\beta}}X^3, 
\quad
Y=\frac{2^{2(\alpha-a)}}{5^{2(b-\beta)}}z_1^6,\quad
X=\frac{z_2}{5^{b-\beta}}.
\end{equation}

Case (IV): Elementary symbolic computations transform  \eqref{mult 4-1} 
into the equivalent equation
\[
3(2^{2(\al-a)}5^{2(\be-b)}z_1^6)^2 +20(2^{2(\al-a)}5^{2(\be-b)}z_1^6)+10
= z_2^3,
\]
which we rewrite as
\begin{equation} \label{eq EC IV}
3Y^2+20Y+10=10X^3, 
\quad
Y=2^{2(\alpha-a)}5^{2(\beta-b)}z_1^6, \quad X=z_2.
\end{equation}

\vspace*{3pt}
We note that in every equation \eqref{eq EC I}-\eqref{eq EC IV},
the change of variable $X=\mu X_1$, where $\mu$ is an appropriate 
explicit $S$-integer with $S\subseteq \{2,5\}$, depending on the classes 
$\bmod\,3$ of $a-\alpha, b-\beta$, leads to an equation
\begin{equation}  \label{eq uniform expression of EC's}
3Y^2+20Y+10 = cX_1^3,
\end{equation}
where $c$ runs through a ``small'' explicit set of $S$-integers with
$S\subseteq\{2,5\}$ (see below for each case separately).
The elliptic curve defined by \eqref{eq uniform expression of EC's} is
isomorphic to the elliptic curve
\begin{equation} \label{eq common short Weierstrass form}
y_1^2=x_1^3+630c^2, \quad x_1=3cX_1,\; y_1=3c(3Y+10).
\end{equation}

\begin{remark} \label{rem:S_integral_points}
In all cases (I)-(IV), $X$ and $Y$ are $S$-integers with
$S\subseteq\{2,5\}$ and we will see that $X_1$ is also an $S$-integer.
Therefore we will have to compute all $S$-integral points $(x_1,y_1)$ on the 
elliptic curve defined by \eqref{eq common short Weierstrass form}.
For certain values of $c$ the rank of the corresponding elliptic curve is
zero with trivial torsion subgroup, therefore no rational points exist.
For all the remaining values of $c$ the rank is $2$ and we compute the 
$S$-integral points with the aid of the {\sc magma} \cite{Magma} routines  
{\tt SIntegralPoints}  (when $S\neq \emptyset $) or {\tt IntegralPoints}
(when $S=\emptyset$); for the background of these routines we refer, 
respectively, to \cite{PethoZimmerGebelHerrmann99} and 
\cite{StroekerTzanakis94},  \cite{GebelPethoZimmer94} (see also 
\cite{Tzanakisbook13}). We observe that the $y$-coordinate of an $S$-integral point corresponds to a $Y$ that must be equal to the product of an $S$-integer times a sixth power of a non-zero integer. If that does not happen then no solutions of \eqref{eq initial n=3} arise from that point. We ask the reader to have these remarks in mind whenever we expose the 
solutions of \eqref{eq common short Weierstrass form} for the various values 
of $c$.
\end{remark}

We consider equation \eqref{eq common short Weierstrass form} separately
for each case (I)-(IV).

Case (I): We put $(a-\alpha,b-\beta)=(3a_1+i,3b_1+j)$, where 
$0\leq i,j\leq 2$. Then, in \eqref{eq EC I} we have
\[
Y=\frac{z_1^6}{2^{6a_1+2i}5^{2b_1+2j}}\raisepunct{,} \quad
X=\frac{z_2}{2^{3a_1+i}5^{3b_1+j}}\raisepunct{,} \quad
\frac{1}{2^{a-\al}5^{b-\be}}X^3=cX_1^3\,,
\]
where
\[
c=\frac{1}{2^i5^j}\raisepunct{,} 
\quad X_1=\frac{z_2}{2^{4a_1+i}5^{4b_1+j}}\raisepunct{.}
\]
Now we consider equation \eqref{eq common short Weierstrass form} with
\[
c\in\{1,\, 1/2,\, 1/4,\, 1/5,\, 1/25,\, 1/10,\, 1/50,\, 1/20,\, 1/100\}.
\]
When $c\in\{1,\,1/2,\,1/5,\, 1/10,\, 1/50,\, 1/20 \}$, the curve 
defined by the equation \eqref{eq common short Weierstrass form}
is of zero rank with trivial torsion subgroup, hence there are no rational points.

For the remaining values $c=1/4,1/25,1/100$ equation 
\eqref{eq common short Weierstrass form} defines an elliptic curve of 
rank $2$ and we have to compute all $S$-integral points on it, where
$S=\{2,5\}$.
\newline{\indent}
When $c=1/4$, equation \eqref{eq common short Weierstrass form} becomes
$y_1^2=x_1^3+\frac{315}{8}$ and its $S$-integral points are 
\[
(x_1,y_1)=(-\frac{3}{2}\raisepunct{,}\,\pm 6),\:
(-\frac{33}{50}\raisepunct{,}\,\pm\frac{1563}{250})\raisepunct{,}\:
(\frac{9}{4}\raisepunct{,}\,\pm\frac{57}{8})\raisepunct{,}\:
(\frac{849}{256}\raisepunct{,}\,\pm\frac{35673}{4096})\raisepunct{,}\:
(\frac{23}{2}\raisepunct{,}\,\pm\frac{79}{2})\raisepunct{.}
\]
From \eqref{eq common short Weierstrass form}, the values of $Y$ 
corresponding to the above solutions are respectively:
\[
Y=-\frac{2}{3}\raisepunct{,}\;-6,\;-\frac{2^4\,13}{3\cdot 5^3}\raisepunct{,}
\;-\frac{2^2\,191}{5^3}\raisepunct{,}\;
-\frac{1}{6}\raisepunct{,}\;-\frac{13}{2}\raisepunct{,}\; 
\frac{13\cdot 127}{2^{10}\,3}\raisepunct{,}\; 
-\frac{3\cdot 2459}{2^{10}}\raisepunct{,}\;
\frac{2^7}{3^2}\raisepunct{,}\; -\frac{2^2\,47}{3^2}\raisepunct{.}
\] 
Then from Remark \ref{rem:S_integral_points} these values do not lead to a solution and hence for $c=1/4$ equation \eqref{eq EC I} has no solutions.

When $c=1/25$, equation \eqref{eq common short Weierstrass form},
becomes $y_1^2=x_1^3+\frac{126}{125}$ and its $S$-integral points are
\[
(x_1,y_1)=(-\frac{1}{5}\raisepunct{,}\,\pm 1),\; 
(\frac{1009}{2500}\raisepunct{,}\,\pm\frac{129527}{125000})\raisepunct{,}\;
(\frac{69}{80}\raisepunct{,}\,\pm\frac{411}{320})\raisepunct{,}\;
(\frac{99}{25}\raisepunct{,}\,\pm\frac{993}{125}),
\]
with corresponding values of $Y$
\[
Y=-\frac{5}{3^2}\raisepunct{,}\; -\frac{5\cdot 11}{3^2}\raisepunct{,}\;
-\frac{59\cdot 347}{2^3\,3^2\,5^4}\raisepunct{,}\;
-\frac{31\cdot 71\cdot 127}{2^3\,3^2\,5^4}\raisepunct{,}\;
\frac{15}{2^6}\raisepunct{,}\; -\frac{5^2\,53}{2^6\,3}\raisepunct{,}\;
\frac{281}{15}\raisepunct{,}\; -\frac{127}{5}\raisepunct{.}
\]
Again by remark \ref{rem:S_integral_points} these values do not lead to a solution.

Finally --for case (I)-- if $c=1/100$ then 
\eqref{eq common short Weierstrass form} becomes 
$y_1^2=x_1^3+\frac{63}{1000}$ and all its $S$-integral points are
\begin{align*}
(x_1,y_1)= & (-\frac{159}{400}\raisepunct{,}\,
                      \pm\frac{111}{8000})\raisepunct{,}\;
(\frac{1}{100}\raisepunct{,}\,\pm\frac{251}{1000})\raisepunct{,}\;
(\frac{3}{10}\raisepunct{,}\,\pm\frac{3}{10})\raisepunct{,}\;
(\frac{81}{100}\raisepunct{,}\,\pm\frac{771}{1000})\raisepunct{,}
\\[2pt]
 & (\frac{129921}{16\cdot 10^4}\raisepunct{,}\,
 \pm\frac{49508031}{64\cdot 10^6})\raisepunct{,}\;
 (\frac{33}{10},\,\pm 6).
 \end{align*}
These respectively give
\begin{align*}
Y= & -\frac{7\cdot 109}{2^4\,15}\raisepunct{,}\; 
-\frac{3^2\,31}{2^4\,5}\raisepunct{,}\;
  -\frac{7^2}{90}\raisepunct{,}\; 
  -\frac{19\cdot 29}{90}\raisepunct{,}\;
  0,\; -\frac{2^2\,5}{3}\raisepunct{,}\; 
  \frac{157}{30}\raisepunct{,}\; -\frac{119}{10}\raisepunct{,}\;
  \frac{13\cdot 83\cdot 3121}{2^{10}\,5^4}\raisepunct{,}\;
  \\[2pt]
 & -\frac{7\cdot 47\cdot 67\cdot 1039}{2^{10}\,5^4\,3}\raisepunct{,}\;
  \frac{190}{3}\raisepunct{,}\; -70.
\end{align*}
As above these values do not lead to a solution.

\emph{Conclusion: No solutions to \eqref{eq EC I} exist.}

Case (II): We put $a-\alpha=3a_1+i $ with $0\leq i\leq 2$. 
Then, in \eqref{eq EC II} we have
\[
Y=\frac{5^{\be-b}z_1^6}{2^{6a_1+2i}}\raisepunct{,} \quad
X=\frac{z_2}{2^{3a_1+i}}\raisepunct{,} \quad
\frac{5}{2^{a-\al}}X^3=cX_1^3\,,
\]
where
\[
c=\frac{5}{2^i}\raisepunct{,} 
\quad X_1=\frac{z_2}{2^{4a_1+i}}\raisepunct{.}
\]
Thus we consider equation \eqref{eq common short Weierstrass form} with
$c\in\{5,\,5/2,\,5/4\}$. Now $X,Y$ and $X_1$ are $S$-integers with 
$S=\{2\}$, and $(x_1,y_1)$ is $S$-integral solution to 
\eqref{eq common short Weierstrass form}. 

If $c=5$, then \eqref{eq common short Weierstrass form} becomes 
$y_1^2=x_1^3+15750$ and all its $S$-integral solutions are
\[
(x_1,y_1)=(-5,\,\pm 125),\;
(\frac{345}{16}\raisepunct{,}\,\pm\frac{10275}{64}),
\;(99,\,\pm 993).
\]
These furnish us with the following values of $Y$:
\[
Y=-\frac{5}{9}\raisepunct{,}\;-\frac{55}{9}\raisepunct{,}\;
\frac{15}{2^6}\raisepunct{,}\; \frac{5^2\,53}{2^6\,3}\raisepunct{,}\; 
\frac{281}{15}\raisepunct{,}\; -\frac{127}{5}\raisepunct{.}
\]
Again by Remark \ref{rem:S_integral_points} the above values do not lead to a solution. Therefore, when $c=5$, \eqref{eq EC II} has no solutions.

When $c=5/2$, the elliptic curve defined by 
\eqref{eq common short Weierstrass form} is of zero rank with trivial
torsion subgroup, hence there are no rational solutions.

When $c=5/4$, \eqref{eq common short Weierstrass form} becomes 
$y_1^2=x_1^3+7875/8$ and its $S$-integral solutions are
\[
(x_1,y_1)= (-\frac{159}{16}\raisepunct{,}\,\pm\frac{111}{64})\raisepunct{,}\;
(\frac{1}{4}\raisepunct{,}\,\pm\frac{251}{8})\raisepunct{,}\;
(\frac{15}{2}\raisepunct{,}\,\pm\frac{75}{2})\raisepunct{,}\;
(\frac{81}{4}\raisepunct{,}\,\pm\frac{771}{8})\raisepunct{,}\;
(\frac{165}{2}\raisepunct{,}\,\pm 750),
\]
with the corresponding values of $Y$ being
\begin{align*}
 Y= & -\frac{7\cdot 109}{2^4\cdot 15}\raisepunct{,},\;
 -\frac{3^2\cdot 31}{2^4\cdot 5}\raisepunct{,}\;
-\frac{7^2}{90}\raisepunct{,}\; 
-\frac{19\cdot 29}{90}\raisepunct{,}\;
0,\; -\frac{2^2\,5}{3}\raisepunct{,}\; 
\frac{157}{30}\raisepunct{,}\; -\frac{7\cdot 17}{10}\raisepunct{,}\; 
-\frac{7\cdot 17}{10}\raisepunct{,} \\
& \frac{10\cdot 19}{3}\raisepunct{,}\; -70.   
\end{align*}
Similar to the above case these values of $Y$ do not lead to a solution of \eqref{eq EC II}.

\emph{Conclusion: No solutions to \eqref{eq EC II} exist.}

Case (III): We put $b-\beta=3b_1+j $ with $0\leq j\leq 2$. 
Then, in \eqref{eq EC III} we have
\[
Y=\frac{2^{\al-a}z_1^6}{5^{6b_1+2j}}\raisepunct{,} \quad
X=\frac{z_2}{5^{3b_1+j}}\raisepunct{,} \quad
\frac{2}{5^{b-\be}}X^3=cX_1^3\,,
\]
where
\[
c=\frac{2}{5^j}\raisepunct{,} 
\quad X_1=\frac{z_2}{2^{4b_1+j}}\raisepunct{.}
\]
Thus we consider equation \eqref{eq common short Weierstrass form} with
$c\in\{2,\,2/5,\,2/25\}$. Now $X,Y$ and $X_1$ are $S$-integers with $S=\{5\}$, 
and $(x_1,y_1)$ is $S$-integral solution to 
\eqref{eq common short Weierstrass form}. 

If $c=2$, then \eqref{eq common short Weierstrass form} becomes 
$y_1^2=x_1^3+2520$. Its $S$-integer solutions are
\[
(x_1,y_1)=(-6,\,\pm 48),\;
(-\frac{66}{25}\raisepunct{,}\,\pm\frac{6252}{125})\raisepunct{,}\,\;
(9,\,\pm 57),\; (46,\,\pm 316),
\]
and the corresponding values of $Y$ are
\[
Y=-\frac{2}{3}\raisepunct{,}\; -6,\; -\frac{2^4\,13}{3\cdot 5^3}\raisepunct{,}\;
-\frac{2^2\,191}{5^3}\raisepunct{,}\; -\frac{1}{6}\raisepunct{,},\;
-\frac{13}{2}\raisepunct{,}\;
\frac{2^7}{3^2}\raisepunct{,}\; -\frac{2^2\,47}{3^2},
\]
and none of these values of $Y$ lead to a solution of \eqref{eq EC III}.

When $c=2/5$, the elliptic curve defined by 
\eqref{eq common short Weierstrass form} is of zero rank with trivial
torsion subgroup, hence there are no rational solutions.

When $c=2/25$, \eqref{eq common short Weierstrass form} becomes 
$y_1^2=x_1^3+504/125$, the $S$-integral solutions of which are
\[
(x_1,y_1)=(\frac{1}{25}\raisepunct{,}\,\pm\frac{251}{125})\raisepunct{,}\;
(\frac{6}{5}\raisepunct{,}\,\pm\frac{12}{5})\raisepunct{,}\;
(\frac{81}{25}\raisepunct{,}\,\pm\frac{771}{125})\raisepunct{,}\;
(\frac{66}{5}\raisepunct{,}\,\pm 48),
\]
with the corresponding values of $Y$ being
\[
Y=-\frac{7^2}{90}\raisepunct{,}\; 
-\frac{19\cdot 29}{90}\raisepunct{,}\;
0,\; -\frac{2^2\,5}{3}\raisepunct{,}\; 
\frac{157}{30}\raisepunct{,}\; -\frac{7\cdot 17}{10}\raisepunct{,}\;
\frac{10\cdot 19}{3}\raisepunct{,}\; -70.
\]
However, no non-zero value of $Y$ lead to a solution of \eqref{eq EC III}.

\emph{Conclusion: No solutions to \eqref{eq EC III} exist.}

Case (IV): Now, $X,Y$ in equation \eqref{eq EC IV} are integers and this
equation is equivalent to $y_1^2=x_1^3+63000$, where $x_1=30X$ and 
$y_1=30(3Y+10)$. All integer solutions of this equation (if we forget the
above special form of $x_1,y_1$) are 
\[
(x_1,y_1)=(1,\,\pm 251),\; (30,\,\pm 300),\;(81,\pm 771),\;
(330,\,\pm 6000),
\]
from which only the solution $(x_1,y_1)=(330,-6000)$ returns to
a non-zero integral solution $(X,Y)$, namely, $(X,Y)=(11,-70)$. But this value
of $Y$ is not of the form required by \eqref{eq EC IV}.

\emph{Conclusion: No solutions to \eqref{eq EC IV} exist.} 

In view of our previous conclusions we have proved that equation \eqref{eq initial n=3}
has no solutions which proves the following:
\begin{proposition} \label{prop p=3}
Equation \eqref{eq general} with $n=3$ is impossible.
\end{proposition}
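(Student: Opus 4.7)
The plan is to follow the case-by-case framework of Section \ref{sec:primitive_solutions}, specializing to $n = 3$ and exploiting the fact that with a cubic exponent one side of each equation naturally defines a genus-$1$ curve that can be put into Mordell form. In each of Cases (I)--(IV), I start from the first equation of the pair \eqref{mult 1-1}, \eqref{mult 2-1}, \eqref{mult 3-1}, \eqref{mult 4-1} and divide through by an appropriate power of $2$ and $5$ so that the cubic term takes the shape $cX_1^3$ with $X_1 \in \Z[1/10]$ and $c$ an $S$-unit, $S \subseteq \{2, 5\}$, whose specific value depends on the residue classes of $a - \alpha$ and $b - \beta$ modulo $3$ (so $c$ ranges over an explicit finite list: a single value in Case (IV), three values in each of Cases (II) and (III), and nine values in Case (I)). The upshot is an equation of the form $3Y^2 + 20Y + 10 = cX_1^3$ in which $Y$ is forced to be a prescribed $S$-unit multiple of the sixth power $z_1^6$.

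Next, each such equation is birational to the Mordell curve
\[
E_c : y_1^2 = x_1^3 + 630c^2,
\]
via $x_1 = 3cX_1$ and $y_1 = 3c(3Y + 10)$. For every admissible $c$ I would compute the Mordell--Weil rank of $E_c$ over $\Q$ and dispose immediately of those cases where the rank is zero and the torsion is trivial; I expect in the remaining cases the rank will be $2$. For those rank-$2$ curves I would compute the complete set of $S$-integral points using \texttt{Magma}'s \texttt{SIntegralPoints} (or \texttt{IntegralPoints} when $S=\emptyset$, as happens in Case (IV)), then back-solve for $Y$ and test whether $Y$ has the required form: a specific $S$-unit times a non-zero sixth integer power, with $2$- and $5$-adic valuations consistent with the chosen case. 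If no $S$-integral point survives this test in any $(c, \text{case})$ combination, then \eqref{eq initial n=3} has no solutions.

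The main obstacle is the enumeration of $S$-integral points on the positive-rank Mordell curves $E_c$. This step is not elementary: it requires producing a Mordell--Weil basis via a successful descent, then applying Baker-type bounds on linear forms in elliptic logarithms (and their $S$-adic analogues when $S \neq \emptyset$), followed by an LLL-reduction sieve to cut the enormous initial height bound down to an enumerable range. I would rely on the algorithms of \cite{PethoZimmerGebelHerrmann99, StroekerTzanakis94, GebelPethoZimmer94} as implemented in \texttt{Magma}. Once the (finite) lists of $S$-integral points are in hand, the verification that none yields an admissible $Y$ is a bounded, mechanical final check, after which the proposition follows.
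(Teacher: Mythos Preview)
Your proposal is correct and follows essentially the same route as the paper: reduce each of Cases (I)--(IV) via the first equation of each pair to $3Y^2+20Y+10=cX_1^3$ with the exact value counts you state, pass to the Mordell curve $y_1^2=x_1^3+630c^2$ by $x_1=3cX_1$, $y_1=3c(3Y+10)$, discard the rank-$0$ trivial-torsion cases, compute $S$-integral points on the remaining rank-$2$ curves with \texttt{Magma}, and verify that no resulting $Y$ is an $S$-unit times a nonzero sixth power. Even the anticipated dichotomy (rank $0$ or $2$) and the observation that Case~(IV) needs only ordinary integral points match the paper's argument.
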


\subsection{The case $n= 5$}  \label{subsec n=5}

Suppose $(x,z)$ is a solution of \eqref{eq general} for $n=5$. 
Let $r = (x,d)$, then $x = x_1r$ and $d = d_1r$ with 
$(x_1,d_1)=1$, hence $r^5\mid z^5$ and consequently $r\mid z$. Setting
$z=rz_1$ we obtain the equation
\[
(x_1 - d_1)^5 + x_1^5 + (x_1 + d_1)^5 = z_1^5, \quad (x_1,d_1)=1.
\]
By Theorem 1.1 of \cite{BennettKoutsianas20} this equation has nonzero 
integer solutions only when $d_1=2$, in which case the only solution is 
$(x_1,z_1)=\pm(1,3)$.
It follows that $d=2r$, which shows that, in $d=2^a5^b$ we have $a\geq 1$,
hence $r=2^{a-1}5^b$ and 
$(x,z)=\pm(2^{a-1}5^b,3\cdot2^{a-1}5^b)=\pm (d/2,3d/2)$.
Thus we have proved the following:
\begin{proposition}\label{prop:solutions_n_5}
Equation \eqref{eq general} with $n=5$ has integer solutions only if $a\geq 1$,
in which case all its integer solutions are given by
$(x,z)=\pm(d/2,3d/2)$.
\end{proposition}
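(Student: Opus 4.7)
The plan is to reduce the problem to the coprime case $\gcd(x,d)=1$, which is resolved by \cite[Theorem 1.1]{BennettKoutsianas20}, and then translate the restriction $d=2^a5^b$ back through the reduction. The key observation is that the left-hand side of \eqref{eq general}, viewed as a polynomial in $x$ and $d$, is homogeneous of degree $5$, so pulling out a common factor is straightforward.

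First I would set $r=\gcd(x,d)$ and write $x=rx_1$, $d=rd_1$ with $\gcd(x_1,d_1)=1$. Substituting into \eqref{eq general} and factoring out $r^5$ from the left-hand side gives
\[
r^5\bigl((x_1-d_1)^5 + x_1^5 + (x_1+d_1)^5\bigr) = z^5,
\]
so $r^5 \mid z^5$ and consequently $r \mid z$. Writing $z = r z_1$ and dividing through by $r^5$, the equation reduces to $(x_1-d_1)^5 + x_1^5 + (x_1+d_1)^5 = z_1^5$ with $\gcd(x_1,d_1)=1$ and $x_1 z_1 \neq 0$.

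At this stage I would invoke \cite[Theorem 1.1]{BennettKoutsianas20}, which asserts that this coprime equation has a nontrivial integer solution only when $d_1 = 2$, and in that case $(x_1,z_1) = \pm(1,3)$. Translating back, $d = r d_1 = 2r$, so the hypothesis $d = 2^a 5^b$ forces $a \geq 1$ and $r = 2^{a-1} 5^b = d/2$. Hence
\[
(x,z) = (rx_1, r z_1) = \pm(d/2, 3d/2),
\]
which is exactly the claim of the proposition.

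The main obstacle in this argument is of course the deep input from \cite{BennettKoutsianas20}, whose proof involves the full modular method applied to two Frey curves. Once that theorem is taken as a black box, the remaining work is purely bookkeeping with $p$-adic valuations, and no Diophantine estimate or elliptic curve computation is required at this stage.
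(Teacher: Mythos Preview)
Your proof is correct and follows essentially the same approach as the paper's own argument: reduce to the coprime case by dividing out $r=\gcd(x,d)$, invoke \cite[Theorem~1.1]{BennettKoutsianas20} to conclude $d_1=2$ and $(x_1,z_1)=\pm(1,3)$, and translate back to obtain $a\geq 1$ and $(x,z)=\pm(d/2,3d/2)$. The paper's proof is nearly identical line by line.
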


\section{The modular method for $n\geq 7$} \label{sec:Frey_curves}

As already noted in equation \eqref{eq general}, it suffices
to consider only prime values of $n$.
In this section we prove that there are no solutions of 
\eqref{eq general} when $n$ is a prime 
greater or equal to $7$. In the proof we make use of  the 
modular method which has its origin in the proof of Fermat's Last 
Theorem \cite{Wiles95}. The main idea in the modular method is to 
attach Frey-Hellegourach curves associated with our equations and 
using the modularity of elliptic curves 
\cite{Wiles95,TaylorWiles95,BreuilConradDiamondTaylor01}, the work of 
Mazur \cite{Mazur78} and Ribet's level-lowering theorem 
\cite{Ribet90}, to compare Galois representations. 
For the rest of the section we assume that $n\geq 7$ is a prime.

Before we study the equations \eqref{mult 1-1}-\eqref{mult 4-2} with 
the modular method we have to recall some standard results and 
terminology. The reader can find a more detailed exposition of the 
techniques and ideas in, for example, \cite[Chapter 15]{Cohen07}.

Suppose $f$ is a cuspidal newform of weight $2$ and 
level $N$ with $q$-expansion
$$
f = q + \sum_{i=2}^\infty a_i(f)q^i.
$$
We denote by $K_f$ the \textit{eigenvalue field} of $f$ and say that 
$f$ is \textit{irrational} if $[K_f:\Q]>1$ and \textit{rational} 
otherwise. Suppose $n$ is a rational prime and $\fn\mid n$ a prime 
ideal in $K_f$ above $n$. Then, we can associate a continuous, 
semisimple Galois representation
$$
\brhof:\Gal(\bar{\Q}/\Q)\rightarrow \GL_2(\bar{\mathbb{F}}_n),
$$
that is unramified at all primes $\ell\nmid nN_f$ and 
$\Tr(\brhof(\Frob_\ell))\equiv a_\ell(f)\pmod{\fn}$ where $\Frob_\ell$ 
is a Frobenius element at $\ell$.

Suppose $E$ is an elliptic curve over $\Q$ with conductor $N_E$. 
For a prime $\ell$ of good reduction of $E$, we let 
$a_\ell(E) = \ell + 1 - \#\tilde{E}(\Fl)$, where $\tilde{E}$ is the 
reduction of $E$ at $\ell$. We denote by $\brhoE$ the Galois 
representation of $\Gal(\bar{\Q}/\Q)$ acting on the $n$-torsion 
subgroup of $E$.

The following proposition provides a 
standard technique that is used to bound $n$ and its origin goes back 
to Serre \cite{Serre87}. 
\begin{proposition}\label{prop:elimination_difference}
Suppose $f$ is a cuspidal newform of weight $2$, level $N_f$ and 
trivial character with eigenvalue field $K_f$. We assume that 
$\brhof\simeq\brhoE$ where $\brhof$ is the associate to $f$ residual 
representation of $\Gal(\bar{\Q}/\Q)$ and $\fn\mid n$ is a prime ideal 
in $K_f$. Let $\ell\neq n$ be a prime, then
\begin{itemize}
    \item if $\ell\nmid N_E N_f$ then $a_\ell(f)\equiv 
    a_\ell(E)\pmod{\fn}$,
    \item if $\ell\nmid N_f$ and $\ell\|N_E$ 
              then $a_\ell(f)\equiv\pm (\ell + 1)\pmod{\fn}$,
\end{itemize}
where $a_\ell(f)$ is the Hecke eigenvalue of $f$ at $\ell$.
\end{proposition}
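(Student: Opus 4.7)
The plan is to reduce both congruences to a comparison of traces of Frobenius under the given isomorphism $\brhof \simeq \brhoE$. The inputs are standard: the construction of $\brhof$ recalled just above the statement says that $\Tr\brhof(\Frob_\ell) \equiv a_\ell(f)\pmod{\fn}$ for every $\ell \nmid nN_f$; and on the elliptic-curve side, the Weil conjectures for $\tilde{E}/\Fl$ (equivalently, the shape of the characteristic polynomial of Frobenius on the Tate module $T_n E$) give $\Tr\brhoE(\Frob_\ell) \equiv a_\ell(E)\pmod{n}$ for every $\ell \nmid nN_E$, while $\brhoE$ is unramified at exactly these primes.

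For the first bullet, $\ell \nmid n N_E N_f$ puts us in the good-reduction range for both representations, so a Frobenius $\Frob_\ell$ is well defined up to conjugacy on both sides. Since $\brhof \simeq \brhoE$, the traces must be congruent modulo $\fn$, and the two computations of the traces recalled above combine to give $a_\ell(f) \equiv a_\ell(E) \pmod{\fn}$.

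For the second bullet, the key observation is that the hypothesis $\ell \nmid nN_f$ already makes $\brhof$ unramified at $\ell$, so the isomorphism forces $\brhoE$ to be unramified at $\ell$ as well, even though $\ell \| N_E$. I would justify this by invoking the Tate uniformization $E(\bar{\Q}_\ell) \simeq \bar{\Q}_\ell^\times / q_E^{\Z}$, which presents $E[n]$ as $\langle \zeta_n, q_E^{1/n} \rangle / q_E^{\Z}$; inertia at $\ell$ acts trivially on $\zeta_n$ and sends $q_E^{1/n}$ to $\zeta_n^{v_\ell(q_E)/n} q_E^{1/n}$, so the unramifiedness of $\brhoE$ at $\ell$ is equivalent to $n \mid v_\ell(q_E) = v_\ell(\Delta_E^{\min})$. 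Under that divisibility, Frobenius acts on the (now unramified) two-dimensional mod-$n$ representation with eigenvalues $\epsilon_\ell$ and $\epsilon_\ell \cdot \ell$, where $\epsilon_\ell = +1$ or $-1$ according to whether the reduction is split or non-split multiplicative. Hence $\Tr\brhoE(\Frob_\ell) \equiv \pm(\ell+1) \pmod{n}$, and combining with $\Tr\brhof(\Frob_\ell) \equiv a_\ell(f) \pmod{\fn}$ yields the claimed congruence.

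Both bullets are essentially formal once the dictionary between Hecke eigenvalues, characteristic polynomials of Frobenius, and local behaviour of the Galois representations is in place; the only delicate point is the Tate-curve analysis at primes of multiplicative reduction, in particular tracking the sign $\epsilon_\ell$ and noting that the divisibility $n \mid v_\ell(\Delta_E^{\min})$ is \emph{forced} by the isomorphism rather than assumed, which is precisely what makes this lemma the bridge to Ribet's level-lowering in the applications of Section \ref{sec:Proof large n}.
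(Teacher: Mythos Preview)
The paper does not actually prove this proposition: it is stated as a ``standard technique'' whose origin is attributed to Serre \cite{Serre87}, and is then used as a black box in Section~\ref{sec:Proof large n}. Your argument supplies precisely the details the paper omits, and it is correct in substance. One cosmetic slip: in the Tate-curve step, a generator of tame inertia sends $q_E^{1/n}$ to $\zeta_n^{\,v_\ell(q_E)}\,q_E^{1/n}$, not $\zeta_n^{\,v_\ell(q_E)/n}\,q_E^{1/n}$; your conclusion that unramifiedness of $\brhoE$ at $\ell$ is equivalent to $n\mid v_\ell(q_E)=v_\ell(\Delta_E^{\min})$ is unaffected.
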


\subsection{Frey-Hellegourach curves of signature $(n,n,2)$}

We apply the recipes of Bennett and Skinner \cite[Section 2]{BennettSkinner04} to equations \eqref{mult 1-1} - \eqref{mult 4-2}. According to the different values of $a-\alpha$ and $b-\beta$ we attach the corresponding Frey-Hellegourach curve.

\paragraph{\bf Case I} We consider the equations 
\eqref{mult 1-1} and \eqref{mult 1-2}. We recall that 
$a\geq \alpha$ and $b\geq \beta$. For any value of $a-\alpha$ 
the equation \eqref{mult 1-1} satisfies the conditions of case 
(ii) in \cite[Section 2]{BennettSkinner04}.

Next we focus on equation \eqref{mult 1-2}. Suppose that 
$a\geq \alpha + 2$, then we are in case (v) in \cite[Section 2]{BennettSkinner04}. When $a=\alpha$ then we are in case (ii) and when  $a=\alpha + 1$ then we are in case (iv) in \cite[Section 2]{BennettSkinner04}. The Frey-Hellegouarch curves are represented in the Table \ref{tab:Frey_curves_I}.

\begin{table}
    \centering
    \begin{tabular}{||c |c| c||} 
        \hline\hline
        Case & Equation  &  Frey-Hellegourach Curve\\  [0.5ex] 
        \hline\hline
        
        $a \geq \alpha + 2$ & \eqref{mult 1-1} & $E_{I,1}: Y^2=X^3+ 20 ({z_1}^{2n}+2^{2a-2\alpha}5^{2b-2\beta})X^2 + 70{z_1}^{4n}X$ \\[1ex] 
        & \eqref{mult 1-2} &   $F_{I,1}: Y^2 +XY= X^3 - \frac{3z_1^{2n}+2^{2a-2\alpha+1}\cdot 5^{2b-2\beta+1}+1}{4} X^2$\\[0.5ex]
        & & $+ 7\cdot 2^{4(a - \alpha)-5}\cdot 5^{4(b - \beta)+1}X$ \\ [1ex]
        \hline

        $a = \alpha$ & \eqref{mult 1-1} & $E_{I,2}: Y^2=X^3+ 20 ({z_1}^{2n}+5^{2b-2\beta})X^2 + 70{z_1}^{4n}X$ \\[1ex] 
        & \eqref{mult 1-2} &   $F_{I,2}: Y^2 = X^3 + 2 (3{z_1}^{2n}+2\cdot 5^{2b-2\beta+1}) X^2+ 14\cdot 5^{4(b-\beta)+1}X$ \\ [1ex]
        \hline
        
        $a = \alpha + 1$ & \eqref{mult 1-1} & $E_{I,3}: Y^2=X^3+ 20 ({z_1}^{2n}+2^2. 5^{2b-2\beta})X^2 + 70{z_1}^{4n}X$ \\[1ex]
        & \eqref{mult 1-2} &   $F_{I,3}: Y^2 = X^3 - (3{z_1}^{2n}+2^3\cdot 5^{2b-2\beta+1}) X^2+ 56\cdot 5^{4(b-\beta)+1}X$ \\
        \hline\hline
    \end{tabular}
    \caption{Frey-Hellegourach curves for case I. It holds $a\geq \alpha$ and $\beta\leq b$.}\label{tab:Frey_curves_I}
\end{table}
\paragraph{\bf Case II} We consider the equations 
\eqref{mult 2-1} and \eqref{mult 2-2}. We recall that 
$a\geq \alpha$ and $\beta> b$. For any value of $a-\alpha$ the 
equation \eqref{mult 2-1} satisfies the conditions of case (ii) 
in \cite[Section 2]{BennettSkinner04}.

Next we turn to equation \eqref{mult 2-2}. Suppose that 
$a\geq \alpha + 2$, then we are in case (v) in 
\cite[Section 2]{BennettSkinner04}. When $a=\alpha$ then we are 
in case (ii) and when  $a=\alpha + 1$ then we are in case (iv) 
of \cite[Section 2]{BennettSkinner04}. The Frey-Hellegouarch 
curves are represented in the Table \ref{tab:Frey_curves_II}.
\begin{table}
    \centering
    \begin{tabular}{||c |c| c||} 
        \hline
        Case & Equation  &  Frey-Hellegourach Curve\\  [0.5ex] 
        \hline\hline
        $a\geq \alpha + 2$  & \eqref{mult 2-1}  & $E_{II,1}: Y^2= X^3 + 4(5^{2\beta-2b}\cdot z_1^{2n} + 2^{2a-2\alpha})X^2+ 14\cdot 5^{4(\beta-b)-1}\cdot z_1^{4n}X$\\[1ex]
        & \eqref{mult 2-2}& $F_{II,1}: Y^2 +XY= X^3 -  \frac{3\cdot 5^{2\beta-2b}z_1^{2n} + 5\cdot 2^{2a-2\alpha +1}+1}{4} X^2 + 35\cdot 2^{4(a - \alpha)-5} X$\\[1ex]
        \hline
 
        $a=\alpha$  & \eqref{mult 2-1}  & $E_{II,2} : Y^2= X^3 + 4(5^{2\beta-2b}\cdot z_1^{2n} + 1)X^2+ 14\cdot 5^{4(\beta-b)-1}\cdot z_1^{4n}X$\\[1ex]
        & \eqref{mult 2-2}& $F_{II,2} : Y^2 = X^3 +10 (3\cdot 5^{2\beta-2b-1}z_1^{2n} +2) X^2 + 70 X$\\[1ex]
        \hline
 
        $a = \alpha + 1$  & \eqref{mult 2-1}  & $E_{II,3} : Y^2= X^3 + 4(5^{2\beta-2b}\cdot z_1^{2n} + 2^2)X^2+ 14\cdot 5^{4(\beta-b)-1}\cdot z_1^{4n}X$\\[1ex]
        & \eqref{mult 2-2}& $F_{II,3}: Y^2 = X^3 - (3\cdot 5^{2\beta-2b}z_1^{2n} +40) X^2 + 280X$\\
        \hline
    \end{tabular}
    \caption{Frey-Hellegourach curves for the case II. It holds $a\geq \alpha$ and $\beta> b$.}\label{tab:Frey_curves_II}
\end{table}
\paragraph{\bf Case III} We consider the equations 
\eqref{mult 3-1} and \eqref{mult 3-2}. We recall that 
$\alpha\geq a+1$ and $b\geq \beta$. For any value of $a-\alpha$ 
equation \eqref{mult 3-2} satisfies the conditions of case (ii) 
of \cite[Section 2]{BennettSkinner04}.

Now we focus on equation \eqref{mult 3-1}. If  
$\alpha\geq a + 2$, then we are in case (v) of 
\cite[Section 2]{BennettSkinner04}. When $\alpha = a+1$, we are 
in case (iv) of \cite[Section 2]{BennettSkinner04}. The 
Frey-Hellegouarch curves are represented in the Table 
\ref{tab:Frey_curves_III}.
\begin{table}
    \centering
    \begin{tabular}{||c |c| c||} 
        \hline
        Case & Equation  &  Frey-Hellegourach Curve\\  [0.5ex] 
        \hline\hline
 
        $\alpha\geq (a+2)$ & \eqref{mult 3-1}& $E_{III,1}: Y^2+ XY= X^3+ \frac{5(2^{2\alpha-2a}z_1^{2n}+5^{2b-2\beta})-1}{4}X^2$\\[0.5ex]
        & & $+35\cdot 2^{4(a - \alpha)-7}z_1^{4n} X $\\[1ex]
        & \eqref{mult 3-2}& $F_{III,1} : Y^2= X^3+ 4(3\cdot 2^{2\alpha-2a-1}z_1^{2n}+5^{2b-2\beta+1})X^2+14\cdot 5^{4(b - \beta)+1} X $\\[1ex]
        \hline

        $\alpha=a+1$  & \eqref{mult 3-1} & $E_{III,2}: Y^2= X^3+ 5(2^{2}z_1^{2n}+5^{2b-2\beta})X^2+70\cdot z_1^{4n}X $\\[1ex] 
        & \eqref{mult 3-2} & $F_{III,2} : Y^2= X^3+ 4(6z_1^{2n}+5^{2b-2\beta+1})X^2+14\cdot 5^{4(b - \beta)+1} X $\\  
        \hline
    \end{tabular}
    \caption{Frey-Hellegourach curves for the case III. It holds $\alpha\geq a+1$ and $b\geq \beta$.}\label{tab:Frey_curves_III}
\end{table}
\paragraph{\bf Case IV} We consider the equations 
\eqref{mult 4-1} and \eqref{mult 4-2}. We recall that 
$\alpha\geq a+1$ and $\beta\geq b+1$. For any value of 
$a-\alpha$ the equation \eqref{mult 4-2} satisfies the 
conditions of case (ii) in \cite[Section 2]{BennettSkinner04}.

We turn to equation \eqref{mult 4-1} now. If 
$\alpha\geq a + 2$, then we are in case (v) of 
\cite[Section 2]{BennettSkinner04}. When $\alpha = a+1$, 
we are in case (iv) of \cite[Section 2]{BennettSkinner04}. 
The Frey-Hellegouarch curves are represented in the Table 
\ref{tab:Frey_curves_IV}.
\begin{table}
    \centering
    \begin{tabular}{||c |c| c||} 
        \hline
        Case & Equation  &  Frey-Hellegourach Curve\\  [0.5ex] 
        \hline\hline

        $\alpha\geq (a+2)$  & \eqref{mult 4-1} & $E_{IV,1}: Y^2 +XY = X^3 + 2^{2\alpha-2a-2}5^{2\beta-2b}z_1^{2n}X^2$\\
        & & $+ 7\cdot 2^{4(\alpha-a)-7} 5^{4(\beta-b)-1} z_1^{4n}X$\\[1ex]
        & \eqref{mult 4-2} & $F_{IV,1}: Y^2= X^3 +20(3\cdot 2^{2\alpha-2a-1}5^{2\beta - 2b -1}z_1^{2n} + 1)X^2+ 70X$\\[1ex]
        \hline
        
        $\alpha=a+1$  & \eqref{mult 4-1} & $E_{IV,2}: Y^2 = X^3 + (4\cdot 5^{2\beta-2b}z_1^{2n}+1)X^2 + 14\cdot 5^{4(\beta-b)-1} z_1^{4n}X$\\[1ex]
        & \eqref{mult 4-2} & $F_{IV,2}: Y^2= X^3 +20(6\cdot 5^{2\beta - 2b -1} z_1^{2n} + 1)X^2+ 70X$\\[1ex]
        \hline
    \end{tabular}
    \caption{Frey-Hellegourach curves for the case IV. It holds $\alpha\geq a+1$ and $\beta\geq b+1$.}\label{tab:Frey_curves_IV}
\end{table}

Let $E=E_{i,k}$ or $F_{i,k}$ as above. We denote by $\brhoE$ the Galois representation of $\Gal(\bar{\Q}/\Q)$ acting on the $n$-torsion points of $E$.

\begin{proposition}\label{prop:irreducibility}
The representation $\brhoE$ is absolutely irreducible.
\end{proposition}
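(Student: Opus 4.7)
The plan is to combine Mazur's theorem on rational isogenies with the reduction behaviour of the Frey curves at $\ell=7$. First I would observe that every curve $E\in\{E_{i,k},F_{i,k}\}$ in Tables~\ref{tab:Frey_curves_I}--\ref{tab:Frey_curves_IV} has a rational $2$-torsion point at $(X,Y)=(0,0)$, since substituting $X=0$ into the Weierstrass equation yields $Y^2=0$ (or, in models with $a_1=1$, $Y(Y+X)=0$, forcing $Y=0$). Hence each such $E$ admits a $\Q$-rational subgroup $C_2$ of order~$2$.

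Suppose $\brhoE$ is reducible over $\mathbb{F}_n$. Then $E$ admits a $\Q$-rational subgroup $C_n$ of order $n$, and since $\gcd(2,n)=1$ the sum $C_2+C_n$ is cyclic of order $2n$; equivalently, $E$ admits a $\Q$-rational cyclic $2n$-isogeny. By Kenku's strengthening of Mazur's isogeny theorem, the possible degrees of rational cyclic isogenies of elliptic curves over $\Q$ form the finite set $\{1,\ldots,10,12,13,14,15,16,17,18,19,21,25,27,37,43,67,163\}$. For $n$ prime with $n\geq 11$ we have $2n\geq 22$, which is not in this set, giving an immediate contradiction. The sole remaining case is $n=7$ with $2n=14$; here the non-cuspidal $\Q$-rational points of $X_0(14)$ give a known finite list of $j$-invariants of elliptic curves admitting a rational $14$-isogeny, and one verifies that $j(E)$, viewed as a rational function of $z_1$ and the exponents $a,b,\alpha,\beta$, does not equal any of these values. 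Equating $j(E)$ with each candidate $j$-value yields a polynomial equation in $z_1$ with only finitely many solutions, which are inspected one by one.

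To upgrade irreducibility over $\mathbb{F}_n$ to absolute irreducibility, I would separately rule out the possibility that the image of $\brhoE$ lies in the normalizer of a non-split Cartan subgroup of $\GL_2(\mathbb{F}_n)$. The key input here is that every Frey curve has multiplicative reduction at $\ell=7$: in each Weierstrass model of Tables~\ref{tab:Frey_curves_I}--\ref{tab:Frey_curves_IV}, the coefficient $a_4$ of $X$ is divisible by $7$ while $c_4$ is not, so the minimal discriminant has $7$-adic valuation equal to $2$. Since $n$ is odd and $n\nmid 2$, the image $\brhoE(I_7)$ is a non-trivial cyclic subgroup of order $n$ generated by a unipotent element. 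The normalizer of a non-split Cartan contains no non-trivial unipotent for odd $n$, its non-identity coset consisting of elements of order~$2$; this contradicts the existence of such a unipotent and completes the argument.

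The main obstacle is the case $n=7$, which requires the explicit $j$-invariant comparisons against the finite list of $14$-isogeny $j$-invariants for each Frey curve in each sub-case of Tables~\ref{tab:Frey_curves_I}--\ref{tab:Frey_curves_IV}. The individual computations are elementary but tedious, and benefit naturally from the computer-algebra framework already employed in the paper; a cleaner alternative would be to check that the predicted level-lowered conductors do not match those of the isogeny classes arising from $X_0(14)(\Q)$, bypassing the case-by-case $j$-invariant analysis.
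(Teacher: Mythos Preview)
The paper's own proof is a one-line citation of \cite[Corollary~3.1]{BennettSkinner04}, together with the side condition $z_1z_2\neq\pm 1$. Your proposal is correct in outline and amounts to re-deriving that corollary: the combination of a rational $2$-torsion point with a putative rational $n$-isogeny, fed into the Mazur--Kenku classification of cyclic isogeny degrees, is exactly how Bennett and Skinner argue. So the two are not really different routes; you are unpacking the black box the paper invokes.

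Two points deserve tightening. Your step~5 is superfluous: because $E$ is an elliptic curve over $\Q$, the representation $\brhoE$ is odd, and complex conjugation maps to an involution of determinant $-1$, hence with eigenvalues $1,-1\in\mathbb{F}_n$. No element of a non-split Cartan has two distinct $\mathbb{F}_n$-rational eigenvalues (its elements have either a single repeated $\mathbb{F}_n$-eigenvalue or a conjugate pair in $\mathbb{F}_{n^2}\setminus\mathbb{F}_n$), so the image of $\brhoE$ cannot sit inside a non-split Cartan, and irreducibility over $\mathbb{F}_n$ already forces absolute irreducibility. This also removes the awkwardness that your inertia-at-$7$ argument is unavailable when $n=7$, since then $\ell=n$ and the Tate-curve description of $\brhoE|_{I_7}$ changes. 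The genuine labour left in your plan is the $n=7$ comparison of $j(E)$ against the finitely many $j$-invariants parametrised by $X_0(14)(\Q)$, across all sub-cases of Tables~\ref{tab:Frey_curves_I}--\ref{tab:Frey_curves_IV}; this finite verification is precisely what the citation to Bennett--Skinner absorbs, and is why the paper records the hypothesis $z_1z_2\neq\pm 1$ rather than carrying it out again.
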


\begin{proof}
This is an immediate consequence of \cite[Corollary 3.1]{BennettSkinner04}, based on work by Mazur \cite{Mazur78}, and the fact that $z_1z_2\neq \pm 1$.
\end{proof}

Let 
$$
N_n(E) =N(E)\Bigg/\prod_{q\mid z_1z_2}q.
$$


\begin{proposition}\label{prop:level_lowering}
Suppose $\brhoE$ is as above. Then there exists a newform $f$ of trivial character, weight $2$ and level $N_n(E)$ and a prime ideal $\fn\mid n$ of $K_f$ such that 
$$
\brhoE\simeq \brhof.
$$
\end{proposition}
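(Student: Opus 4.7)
The proof will follow the standard two-step template of the modular method: modularity of $E$, followed by Ribet's level-lowering theorem.

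\textbf{Modularity.} First I would invoke the modularity theorem for elliptic curves over $\Q$ \cite{Wiles95,TaylorWiles95,BreuilConradDiamondTaylor01}: the Frey--Hellegouarch curve $E$ corresponds to a cuspidal newform $g$ of weight $2$, trivial character, and level $N(E)$, giving an isomorphism $\brhoE \simeq \bar{\rho}_{g,\mathfrak{m}}$ for some prime $\mathfrak{m}\mid n$ of $K_g$. The remaining task is then to lower the level from $N(E)$ to $N_n(E) = N(E)/\prod_{q\mid z_1z_2}q$, i.e.\ to strip exactly the primes dividing $z_1z_2$.

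\textbf{Level lowering.} For this I would apply Ribet's theorem \cite{Ribet90}. Its main global hypothesis is that $\brhoE$ be absolutely irreducible, which is Proposition \ref{prop:irreducibility}. The other requirement is local: for each prime $q\mid z_1z_2$ with $q\neq n$, I need $E$ to have multiplicative reduction at $q$ with $n\mid v_q(\Delta_{\min}(E))$, since the theory of the Tate curve then forces $\brhoE$ to be unramified at $q$, so that Ribet's theorem legitimately removes $q$ from the level. (If by accident $n\mid z_1z_2$, I simply do not strip $n$; this has no bearing on later applications, which are always read modulo the chosen $\fn$.)

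\textbf{Verification of the local conditions.} The bulk of the argument is the case-by-case check that at every prime $q\mid z_1z_2$ each Frey--Hellegouarch curve from Tables \ref{tab:Frey_curves_I}--\ref{tab:Frey_curves_IV} has multiplicative reduction with $n\mid v_q(\Delta_{\min})$. Here I would use the three facts established in Section \ref{sec:primitive_solutions}: $\gcd(z_1z_2,10)=1$, $\gcd(z_1,z_2)=1$, and $7\nmid z_1z_2$. Combined with the observation that each Frey curve in our tables was built using cases (ii), (iv), or (v) of the recipes in \cite[Section~2]{BennettSkinner04} applied to the parameters in Table \ref{tab:AaBbCc}, the local conditions are immediate from the discriminant formulas in that recipe: for instance, substituting equation \eqref{mult 1-1} into the discriminant of $E_{I,1}$ shows that $\Delta(E_{I,1})$ equals a $\{2,5,7\}$-unit times $z_1^{8n}z_2^n$, and the same pattern recurs for every other entry of the tables. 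At any prime $q\mid z_1z_2$ outside $\{2,5,7\}$ the given model is already minimal (since $q$ is tame and does not divide the constant factor) and $v_q(\Delta_{\min})\in n\mathbb{Z}_{>0}$, while $q\nmid c_4$ follows by reading the defining Fermat-type equation modulo $q$. This bookkeeping is the only technical, though entirely routine, step and mirrors \cite[Section~2]{BennettSkinner04} exactly; once done, Ribet's theorem produces the desired newform $f$ of weight $2$, trivial character, and level $N_n(E)$, together with a prime $\fn\mid n$ of $K_f$ satisfying $\brhof\simeq\brhoE$.
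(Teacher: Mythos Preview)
Your proposal is correct and follows exactly the paper's approach, which merely cites modularity, Proposition \ref{prop:irreducibility}, Table \ref{tab:Disc_Conductor}, and Ribet's theorem without further elaboration. One small point: when $n\mid z_1z_2$ you can (and, to match the stated level $N_n(E)$ exactly, should) still strip $n$, since $n\mid v_n(\Delta_{\min})$ makes $\brhoE$ finite-flat at $n$, which is precisely the extra hypothesis needed to remove $n$ from the level as well.
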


\begin{proof}
This is an immediate consequence of modularity of elliptic curves \cite{Wiles95,TaylorWiles95,BreuilConradDiamondTaylor01}, Proposition \ref{prop:irreducibility}, Table \ref{tab:Disc_Conductor} and Ribet's level lowering \cite{Ribet90}.
%
\end{proof}

In Table \ref{tab:Disc_Conductor} we have computed the $N_n(E)$ of $\brhoE$ according to \cite[Lemmas 2.1 and 3.3]{BennettSkinner04}.

\begin{table}
    \centering
    \begin{tabular}{|| c | c | c ||}
        \hline
        Frey curve & Discriminant $\Delta(E)$ & $N_n(E)$\\
        \hline\hline
        $E_{I,1}$ & $2^9\cdot 5^3\cdot 7^2 \cdot (z_2 z_1^8)^n$ & $2^8\cdot 5^2\cdot 7$\\
        \hline
        $F_{I,1}$ & $2^{8(a-\alpha)-10}\cdot 3\cdot 5^{8(b-\beta)+2}\cdot 7^2 \cdot z_2^n$ & $2\cdot 3\cdot 5\cdot 7$\\
        \hline
        $E_{I,2}$ & $2^9\cdot 5^3\cdot 7^2 \cdot (z_2 z_1^8)^n$ & $2^8\cdot 5^2\cdot 7$\\
        \hline
        $F_{I,2}$ & $2^{8}\cdot 3\cdot 5^{8(b-\beta)+2}\cdot 7^2 \cdot z_2^n$ & $2^7\cdot 3
        \cdot 5\cdot 7$\\
        \hline
        $E_{I,3}$ & $2^9\cdot 5^3\cdot 7^2 \cdot (z_2 z_1^8)^n$ & $2^8\cdot 5^2\cdot 7$ \\
        \hline
        $F_{I,3}$ & $2^{10}\cdot 3\cdot 5^{8(b-\beta)+2}\cdot 7^2 \cdot z_2^n$ & $2^3\cdot 3\cdot 5\cdot 7$\\
        \hline
        $E_{II,1}$ & $2^9\cdot 5^{8(\beta-b)-2}\cdot 7^2 \cdot (z_2 z_1^8)^n$ & $2^8\cdot 5\cdot 7$\\
        \hline
        $F_{II,1}$ & $2^{8(a-\alpha)-10}\cdot 3\cdot 5^{3}\cdot 7^2 \cdot z_2^n$ & $2\cdot 3\cdot 5^2\cdot 7$\\
        \hline
        $E_{II,2}$ & $2^9\cdot 5^{8(\beta-b)-2}\cdot 7^2 (z_2 z_1^8)^n$ & $2^8\cdot 5\cdot 7$\\
        \hline
        $F_{II,2}$ & $2^{8}\cdot 3\cdot 5^{3}\cdot 7^2 \cdot z_2^n$ & $2^7\cdot 3\cdot 5^2\cdot 7$\\
        \hline
        $E_{II,3}$ & $2^9\cdot 5^{8(\beta-b)-2}\cdot 7^2 \cdot (z_2 z_1^8)^n$ & $2^8\cdot 5\cdot 7$\\
        \hline
        $F_{II,3}$ & $2^{10}\cdot 3\cdot 5^{3}\cdot 7^2 \cdot z_2^n$ & $2^3\cdot 3\cdot 5^2\cdot 7$\\
        \hline
        $E_{III,1}$ & $2^{8(\alpha-a)-14}\cdot 5^{3}\cdot 7^2 \cdot (z_2z_1^8)^n$ & $2\cdot 5^2\cdot 7$\\
        \hline
        $F_{III,1}$ & $2^9\cdot 3\cdot 5^{8(b-\beta)+2}\cdot 7^2 \cdot z_2^n$ & $2^8\cdot 3\cdot 5\cdot 7$\\
        \hline
        $E_{III,2}$ & $2^{6}\cdot 5^{3}\cdot 7^2 \cdot (z_2z_1^8)^n$ & $2^5\cdot 5^2\cdot 7$\\
        \hline
        $F_{III,2}$ & $2^9\cdot 3\cdot 5^{8(\beta-b)+2}\cdot 7^2 \cdot z_2^n$ & $2^8\cdot 3\cdot 5\cdot 7$\\
        \hline
        $E_{IV,1}$ & $2^{8(\alpha-a)-14}\cdot 5^{8(\beta-b)-2}\cdot 7^2 \cdot (z_2z_1^8)^n$ & $2\cdot 5\cdot 7$\\
        \hline
        $F_{IV,1}$ & $2^9\cdot 3\cdot 5^3\cdot 7^2 \cdot z_2^n$ & $2^8\cdot 3\cdot 5^2\cdot 7$\\
        \hline
        $E_{IV,2}$ & $2^{6}\cdot 5^{8(\beta-b)-2}\cdot 7^2 \cdot (z_2z_1^8)^n$ & $2^5\cdot 5\cdot 7$\\
        \hline
        $F_{IV,2}$ & $2^9\cdot 3\cdot 5^3\cdot 7^2 \cdot z_2^n$ & $2^8\cdot 3\cdot 5^2\cdot 7$\\
        \hline
    \end{tabular}
    \caption{The discriminant and $N_n(E)$ of the Frey-Hellegouarch curves.}
    \label{tab:Disc_Conductor}
\end{table}

\section{Proof of Theorem \ref{Thm1} for $n\geq 7$}\label{sec:Proof large n}
\begin{proof}[Proof of Theorem \ref{Thm1}]
Suppose $(x,z)$ is a solution of the equation \eqref{eq general} for some value of $d$ where $d=2^a5^b$ with $a,b\geq 0$ are integers and $n\geq 7$ is a prime. As we explain in Section \ref{sec:primitive_solutions} there exist integers $z_1,z_2,u_1$ and $u_2$ with $(z_1,z_2)=1$, $(z_1z_2,10)=1$ and $u_1,u_2$ are $\{2,5\}$-units such that 
\begin{align*}
    x & = u_1z_1^n,\\
    P & = u_2z_2^n,
\end{align*}
where $P=3x^4 + 20d^2x^2 + 10d^4$. According to the valuation of $u_1$, $u_2$ and $d$ at $2$ and $5$ we have four possible cases (I)-(IV) and for each case we construct two Fermat type equations of signature $(n,n,2)$ for the pair $(z_1,z_2)$; the equations \eqref{mult 1-1}-\eqref{mult 4-2}. From the work of Bennett and Skinner \cite{BennettSkinner04} we attach for each case and pair $(z_1,z_2)$ two Frey-Hellegouarch curves, $E_{i,k}$ and $F_{i,k}$, as we have explained in Section \ref{sec:Frey_curves} (see Tables \ref{tab:Frey_curves_I}-\ref{tab:Frey_curves_IV}). 

We denote by $\brhoEi$ and $\brhoFi$ the Galois representation of $\Gal(\bar{\Q}/\Q)$ acting on the $n$-torsion points of $E_{i,k}$ and $F_{i,k}$, respectively. We denote by $N_n(E_{i,k})$ and $N_n(F_{i,k})$ the Serre level of $\brhoEi$ and $\brhoFi$, respectively (see Table \ref{tab:Disc_Conductor}). Then, from Proposition \ref{prop:level_lowering} we know that there exist a newform $f$ (resp. $g$) of weight $2$, trivial character and level $N_n(E_{i,k})$ (resp. $N_n(F_{i,k})$) such that $\brhof\simeq\brhoEi$ (resp $\brhoFi\simeq \brhog)$ where $\fn\mid n$ (resp. $\fnp\mid n$) is a prime ideal of $K_f$ (resp. $K_g$). 

Because for each pair $(z_1,z_2)$ we have attached two Frey curves we will apply the powerful multi-Frey approach to bound $n$ \cite{BugeaudLucaMignotteSiksek08,BugeaudMignotteSiksek08}. Suppose $\ell\neq 2,3,5,7$ is a prime. We define $\Da=|a - \alpha|$ and $\Db = |b-\beta|$. We also define
$$
R_\ell(f) = 
\begin{cases}
N_{K_f/\Q}\left(a_\ell(E_{i,k}) - a_\ell(f)\right), & \ell\nmid \Delta(E_{i,k}),\\
N_{K_f/\Q}\left((\ell + 1)^2 - a_\ell^2(f)\right), & \ell\mid \Delta(E_{i,k}),
\end{cases}
$$
where $\Delta(E_{i,k})$ is the discriminant of $E_{i,k}$. Similarly, we define
$$
R_\ell^\prime(g) = 
\begin{cases}
N_{K_g/\Q}\left(a_\ell(F_{i,k}) - a_\ell(g)\right), & \ell\nmid \Delta(F_{i,k}),\\
N_{K_g/\Q}\left((\ell + 1)^2 - a_\ell^2(g)\right), & \ell\mid \Delta(F_{i,k}),
\end{cases}
$$
where $\Delta(F_{i,k})$ is the discriminant of $F_{i,k}$. It is important to mention that both $R_\ell(f)$ and $R_\ell^\prime(g)$ depend on the residue class of $(z_1,z_2)$ modulo $\ell$ and $(\Da,\Db)$ modulo $(\ell - 1)$. Now let
$$
T_\ell(f,g) = \ell\cdot \prod_{\substack{(z_1,z_2)\in\Fl^2,\\(\Da,\Db)\in\left(\mathbb{Z}/(\ell - 1)\mathbb{Z}\right)^2}} \gcd(R_\ell(f),R_\ell^\prime(g)).
$$
From Proposition \ref{prop:elimination_difference} we know that if a solution $(z_1,z_2)$ arises from the pair of newforms $(f,g)$ then it should hold $n\mid T_\ell(f,g)$.

We have written a Magma script that computes $U(f,g)=\gcd_{\ell\leq B}(T_\ell(f,g))$ where $B$ is a suitable positive integer. For the majority of the pairs $(f,g)$ it is enough to consider $B = 19$ to deduce that $n\leq 5$. However, there are a few pairs $(f,g)$ for which we have to increase $B$ up to $59$ to get $n\leq 5$. The total amount of time for the above computations was roughly $56$ hours.

In Table \ref{table:newform_computations} we give a summary of the data for the spaces of newforms we had to compute together with the amount of time Magma needed to compute the spaces. It is important to note that there are two ways of computing weight $2$ modular forms in Magma, either the classical approach, or using the package of Hilbert newforms viewing classical newforms as Hilbert newforms over $\Q$ \cite{DembeleVoight13}. The package of Hilbert newforms is faster in current implementation of Magma (Magma V2.25-3) and the totally amount of time was roughly $146$ hours with the most expensive case to be the space of level $134400$ and $107$ hours to be computed.
\end{proof}

\begin{table}[]
    \centering
    \begin{tabular}{| c | c | c | c | c |}
        \hline
        Level &  Dimension & \#conjugacy & ($d$, \#newforms of degree $d$) & Time \\
         & & classes & & \\
        \hline
        $2\cdot 5\cdot 7$ & $1$ & $1$ & $(1,1)$ & $\sim 1$ sec\\
        \hline
        $2\cdot 3\cdot 5\cdot 7$ & $5$ & $5$ & $(1,5)$ & $\sim 1$ sec\\
        \hline
        $2\cdot 5^2\cdot 7$ & $10$ & $8$ & $(1,6)$, $(2,4)$ & $\sim 1$ sec\\
        \hline
        $2^3\cdot 3\cdot 5\cdot 7$ & $12$ & $11$ & $(1,10)$, $(2,2)$ & $\sim 3$ sec\\
        \hline
        $2\cdot 3\cdot 5^2\cdot 7$ & $18$ & $18$ & $(1,18)$ & $\sim 4$ sec\\
        \hline
        $2^5\cdot 5\cdot 7$ & $24$ & $20$ & $(1,16)$, $(2,8)$ & $\sim 3$ sec\\
        \hline
        $2^3\cdot 3\cdot 5^2\cdot 7$ & $58$ & $43$ & $(1,32)$, $(2,14)$, $(3,12)$ & $\sim 1$ min\\
        \hline
        $2^5\cdot 5^2\cdot 7$ & $114$ & $52$ & $(1,22)$, $(2,32)$, $(3,12)$, & $\sim 1$ min\\
         & & & $(4,16)$, $(5,20)$, $(6,12)$ & \\
        \hline
        $2^8\cdot 5\cdot 7$ & $192$ & $64$ & $(1,20)$, $(2,24)$, $(3,36)$& $\sim 22$ sec\\
         & & & $(4,16)$, $(6,96)$ & \\
        \hline
        $2^7\cdot 3\cdot 5\cdot 7$ & $192$ & $112$ & $(1,64)$, $(2,56)$, $(3,36)$, & $\sim 2$ min\\
         & & & $(4,16)$, $(5,20)$ & \\
        \hline
        $2^8\cdot 3\cdot 5\cdot 7$ & $384$ & $128$ & $(1,48)$, $(2,32)$, $(3,48)$,& $\sim 30$ min\\
         & & & $(4,112)$, $(6,48)$, $(8,96)$ & \\
        \hline
        $2^8\cdot 5^2\cdot 7$ & $912$ & $196$ & $(1,52)$, $(2,64)$, $(3,36)$, & $\sim 2$ h\\
         & & & $(4,88)$, $(5,40)$, $(6,168)$ & \\
         & & & $(8,96)$, $(9,72)$, $(12,192)$, & \\
         & & & $(16,32)$, $(18,72)$ & \\
        \hline
        $2^7\cdot 3\cdot 5^2\cdot 7$ & $912$ & $356$ & $(1,176)$, $(2,128)$, $(3,36)$ & $\sim 36$ h\\
         & & & $(4,144)$, $(5,140)$, $(6,48)$ & \\
         & & & $(7,168)$, $(9,72)$ & \\
        \hline
        $2^8\cdot 3\cdot 5^2\cdot 7$ & $1824$ & $396$ & $(1,124)$, $(2,120)$, $(3,60),$& $\sim 107$ h\\
         & & & $(4,208)$, $(5,40)$, $(6,240)$, & \\
         & & & $(8,224)$, $(9,72)$, $(10,80)$ & \\
         & & & $(11,88)$, $(12,192)$, $(13,104)$ & \\
         & & & $(16,192)$, $(20,80)$ & \\
        \hline
    \end{tabular}
    \caption{Data for newform computations.}\label{table:newform_computations}
\end{table}

\bibliographystyle{plain}
\bibliography{my_bibliography}

\end{document}